\def\cH{\mathcal H}
\def\cO{{\mathcal O}}
\def\bZ{{\mathbb Z}}
\def\bC{{\mathbb C}}
\begin{document}
\newtheorem {theo}{Theorem}
\newtheorem {coro}{Corollary}
\newtheorem {lemm}{Lemma}
\newtheorem {rem}{Remark}
\newtheorem {defi}{Definition}
\newtheorem {ques}{Question}
\newtheorem {prop}{Proposition}
\newtheorem {conj}{Conjecture}
\def\spb{\smallpagebreak}
\def\mpb{\vskip 0.5truecm}
\def\bpb{\vskip 1truecm}
\def\wtM{\widetilde M}
\def\tM{\widetilde M}
\def\wtN{\widetilde N}
\def\tN{\widetilde N}
\def\tR{\widetilde R}
\def\tC{\widetilde C}
\def\tX{\widetilde X}
\def\tY{\widetilde Y}
\def\tE{\widetilde E}
\def\tH{\widetilde H}
\def\tL{\widetilde L}
\def\tQ{\widetilde Q}
\def\tS{\widetilde S}
\def\tc{\widetilde c}
\def\talpha{\widetilde\alpha}
\def\ti{\widetilde \iota}
\def\hM{\hat M}
\def\hq{\hat q}
\def\hR{\hat R}
\def\bs{\bigskip}
\def\ms{\medskip}
\def\ni{\noindent}
\def\td{\nabla}
\def\pd{\partial}
\def\hol{$\text{hol}\,$}
\def\Log{\mbox{Log}}
\def\bfQ{{\bf Q}}
\def\Todd{\mbox{Todd}}
\def\top{\mbox{top}}
\def\Pic{\mbox{Pic}}
\def\bP{{\mathbb P}}
\def\dxi{d x^i}
\def\dxj{d x^j}
\def\dyi{d y^i}
\def\dyj{d y^j}
\def\dzi{d z^I}
\def\dzj{d z^J}
\def\ozi{d{\overline z}^I}
\def\ozj{d{\overline z}^J}
\def\oz1{d{\overline z}^1}
\def\oz2{d{\overline z}^2}
\def\oz3{d{\overline z}^3}
\def\sI{\sqrt{-1}}
\def\hol{$\text{hol}\,$}
\def\ok{\overline k}
\def\ol{\overline l}
\def\oJ{\overline J}
\def\oT{\overline T}
\def\oS{\overline S}
\def\oV{\overline V}
\def\oW{\overline W}
\def\oY{\overline Y}
\def\oL{\overline L}
\def\oI{\overline I}
\def\oK{\overline K}
\def\oL{\overline L}
\def\oj{\overline j}
\def\oi{\overline i}
\def\ok{\overline k}
\def\oz{\overline z}
\def\om{\overline mu}
\def\on{\overline nu}
\def\oa{\overline \alpha}
\def\ob{\overline \beta}
\def\oGamma{\overline \Gamma}
\def\of{\overline f}
\def\oN{\overline N}
\def\og{\overline \gamma}
\def\ogamma{\overline \gamma}
\def\odelta{\overline \delta}
\def\otheta{\overline \theta}
\def\ophi{\overline \phi}
\def\opd{\overline \partial}
\def\oA{\overline A} 
\def\oB{\overline B}
\def\oC{\overline C}
\def\oD{\overline D}
\def\oIq1{\oI_1\cdots\oI_{q-1}}
\def\oIq2{\oI_1\cdots\oI_{q-2}}
\def\op{\overline \partial}
\def\ua{{\underline {a}}}
\def\us{{\underline {\sigma}}}
\def\Chow{{\mbox{Chow}}}
\def\vol{{\mbox{vol}}}
\def\dim{{\mbox{dim}}}
\def\rank{{\mbox{rank}}}
\def\diag{{\mbox{diag}}}
\def\tor{\mbox{tor}}
\def\supp{\mbox supp}
\def\bp{{\bf p}}
\def\bk{{\bf k}}
\def\a{{\alpha}}
\def\tchi{\widetilde{\chi}}
\def\ta{\widetilde{\alpha}}
\def\ovarphi{\overline \varphi}
\def\ocH{\overline{\cH}}
\def\tV{\widetilde{V}}
\def\tf{\widetilde{f}}
\def\th{\widetilde{h}}
\def\tT{\widetilde T}
\def\hG{\widehat{G}}
\def\hS{\widehat{S}}
\def\hD{\widehat{D}}
\def\Aut{\mbox{Aut}}
\def\hX{\widehat{X}}
\def\hC{\widehat{C}}
\def\hs{\widehat{s}}

\ni
\title[Syzygies of ample line bundles on fake projective planes]{On the syzygies of ample line bundles \\ on fake projective planes}

\author[]
{Sui-Chung Ng and Sai-Kee Yeung}

\begin{abstract}  
{\it Our goal is to study the syzygies of the projective embeddings defined by the ample line bundles on a fake projective plane $S$. The syzygies are studied in terms of the property $N_p$.  For various kinds of ample line bundles, we give explicit lower bounds for their powers above which the property $N_p$ is satisfied.}
\end{abstract}

\address[]{S.-C. Ng, School of Mathematical Sciences, Shanghai Key Laboratory of PMMP, East China Normal University, Minhang District, Shanghai, China}
\address[]{S.-K. Yeung, Mathematics Department, Purdue University, West Lafayette, IN  47907 USA}
\email{scng@math.ecnu.edu.cn}
\email{yeung@math.purdue.edu}

\thanks{\\Key words:  fake projective planes, pluricanonical mappings, syzygies \\
{\it AMS 2010 Mathematics subject classification:} 13D02, 14C20, 14J29 \ \\
\noindent{The first author was partially supported by National Natural Science Foundation of China (Grant No. 11501205) and Science and Technology Commission of Shanghai Municipality (Grant No. 13dz2260400).
The second author was partially supported by grants from the National Science Foundation, USA}}

\ni{\it }
\maketitle


\section{Introduction}

Studying the very-ampleness, projective normality and syzygies of the ample line bundles on a complex surface is natural and has a long history.  The main purpose
of this paper is to give some detailed estimates for fake projective planes.  

 There are two natural questions for a projective algebraic manifold $M$ of complex dimension $n$
equipped with an ample line bundle $L$.  The first is to study the smallest multiple of $L$ that gives
a projective embedding of the manifold, and the second is to study the syzygy relative to such an embedding.  For the first question, the Fujita Conjecture states that $K_M\otimes L^k$
is very ample if $k\geqslant n+2$.  For the second question, there is the Mukai Conjecture which states that $K_M\otimes L^{n+2+p}$ satisfies the $N_p$ property.
For surfaces, Fujita Conjecture is proved by Reider [R].  The Mukai Conjecture is mostly open.

Fake projective planes are interesting surfaces
 to study among smooth surfaces of general type since they have the smallest Euler number (viz., $3$) among all such surfaces and have been classified in ~[PY1], [PY2] and ~[CS].   However, there is still no good way to describe such surfaces directly from classical algebraic geometry in general.
 For a fake projective plane $S$, $K_S^3$ is very ample from a well-known result of Bombieri and also Reider [R].   
 We also know that $K_S$ has
 no sections.  The natural question of whether $K_S^2$ is very ample or not is not settled, even though there are some examples which are true.
Hence one would expect that sharp results for the syzygy of $S$ may be difficult to achieve.  
 The interest about syzygy is that it would provide
 information about the pluricanonical ring of $S$. 
 As a consequence of the current work, it turns out that we have pretty good information about the
 syzygy, cf. Corollary 1, even though it is still short of Mukai Conjecture.  The results of this paper are close to the optimal ones with the currently available 
 techniques.  The difficulty to achieve a sharper result is related to the difficulty 
regarding the existence of exceptional collection of objects, cf. Conjecture 1 in \S5.  Sharper estimates are obtained for some special fake projective planes as explained in Theorem 9 and the remark following it in \S5.


 The syzygy of a given projective embedding can be described using the so-called property $N_p$, where $p$ is a non-negative integer. 
 For the details of its definition, we refer the reader to~[EL]. Here, we simply recall that the property $N_0$ corresponds to projective normality and the property $N_1$ corresponds to projective normality together with the condition that the homogeneous ideal of the embedded subvariety is generated by quadratic polynomials. 
 The present paper gives effective results for the lower bounds of the powers of a given ample line bundle  satisfying the property $N_p$ on a fake projective plane. 
 The reader may refer to ~[R\'em] or ~[Y] for basic
 geometric facts about fake projective planes.
 
 For the general study of syzygies related to surfaces of general type, the reader can see, for example, the works of  Banagere-Hanumanthu~[BH], Gallego-Purnaprajna~[GP], Hwang-To~[HT], Purnaprajna~[P] and the very recent work of Niu~[N]. Here we remark that unlike the previous results found in the literature for surfaces of general type, which are pertaining to base-point-free line bundles and are expressed in terms of some sort of ``regularity" related the relevant line bundles, we established in this paper explicit and concrete numbers for a given arbitrary ample line bundle, which is not necessarily base-point-free, and in particular, including the  canonical bundle (which has no non-trivial section).

Our first main result is regarding an arbitrary ample line bundle on a fake projective plane.

\begin{theo}\label{mainthm2}
Let $A$ and $N$ be line bundles on a fake projective plane $S$ such that $A$ is ample and $N$ is nef. Then, $A^{6(p+3)}\otimes N$ satisfies the property $N_p$, where $p\geq 0$.
\end{theo}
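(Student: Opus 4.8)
The plan is to reduce everything to the rank-one N\'eron--Severi group of $S$ and then match $A^{6(p+3)}\otimes N$ to the hypotheses of the Ein--Lazarsfeld syzygy criterion [EL]. Since a fake projective plane has second Betti number $1$, the group $\mathrm{NS}(S)$ is generated modulo torsion by a single ample class $H$, so I may write $A\equiv aH$ and $K_S\equiv tH$ with $a\geq 1$ and $t^2H^2=K_S^2=9$. The only integral possibilities are $(t,H^2)=(1,9)$ or $(3,1)$, and the decisive (worst) case is $a=1$, $t=3$, $H^2=1$, i.e.\ $A\equiv H$ and $K_S\equiv 3H$; all numerical inequalities below are to be checked there.

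First I would establish the single piece of effective positivity on which the whole argument rests, namely that $A^{6}$ is very ample. Writing $A^{6}=K_S\otimes(A^{6}\otimes K_S^{-1})$ and noting $A^{6}\otimes K_S^{-1}\equiv(6a-t)H$ with $6a-t\geq 3$, the class $A^{6}\otimes K_S^{-1}$ is nef with self-intersection $(6a-t)^2H^2\geq 9$. Reider's theorem [R] then yields very ampleness provided no effective curve realizes its numerical exceptions; but every irreducible curve on $S$ is numerically $eH$ with $e\geq 1$, so $(A^{6}\otimes K_S^{-1})\cdot E\geq 3>2$, which excludes all exceptional configurations. The same computation shows more generally that $A^{s}\otimes K_S^{-1}$ is nef for $s\geq t$ and base-point-free as soon as $(sa-t)^2H^2\geq 5$.

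With $A^{6}$ very ample in hand, I would invoke [EL] in the form: on a smooth projective surface, $K_S\otimes L_1\otimes\cdots\otimes L_{p+3}\otimes P$ satisfies $N_p$ whenever each $L_i$ is very ample and $P$ is nef. The bookkeeping is forced by the exponent, via
\[
A^{6(p+3)}\otimes N \;=\; K_S\otimes (A^{6})^{\otimes(p+2)}\otimes\bigl(A^{6}\otimes K_S^{-1}\bigr)\otimes N,
\]
so the $K_S$ cancels, the first $p+2$ syzygy factors are the very ample bundle $A^{6}$, the last is $A^{6}\otimes K_S^{-1}$, and the nef remainder is $P=N$. Granting that $A^{6}\otimes K_S^{-1}$ may serve as the $(p+3)$rd very ample factor, property $N_p$ follows at once.

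The main obstacle is precisely this last factor in the borderline case $a=1$, $t=3$: there $A^{6}\otimes K_S^{-1}\equiv 3H\equiv K_S$, and $K_S$ is \emph{not} very ample on a fake projective plane -- indeed $h^0(K_S)=0$ -- so the clean decomposition collapses exactly where the very-ampleness of $K_S^{2}$ is unknown. This is the delicacy the introduction attributes to $K_S^{2}$, and it is why the constant $6$ (rather than something smaller) is forced and why the estimate falls short of the Mukai bound. I would circumvent it by weakening very ampleness of the final factor to base-point-freeness: the computation above shows $A^{6}\otimes K_S^{-1}$ is base-point-free even in the borderline case (Reider needs only $L^2\geq 5$ and forbids curves with $L\cdot E\leq 1$, while here $L\cdot E\geq 3$), and one then applies the base-point-free refinement of the Ein--Lazarsfeld criterion, in which a single syzygy factor is allowed to be merely globally generated provided the rest are very ample. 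Verifying that this refinement applies -- equivalently, controlling the relevant Koszul cohomology with one globally generated but not very ample factor -- is the step I expect to demand the most care.
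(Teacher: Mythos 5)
Your reduction to an Ein--Lazarsfeld-type adjoint criterion breaks down precisely in the worst case you yourself isolate, $A\equiv H$ (i.e.\ $A=G\otimes\epsilon$ for the ample generator $G$ and a torsion $\epsilon$), and it does so in two distinct places. First, the very ampleness of $A^{6}$ is not available there: $A^{6}$ is numerically $K_S^{2}$, and the very-ampleness half of Reider's theorem requires the auxiliary nef bundle $L$ in $K_S\otimes L$ to satisfy $L^{2}\geq 10$, whereas $L=A^{6}\otimes K_S^{-1}$ has $L^{2}=9$; so your curve-intersection check is moot, and in fact this is exactly the open problem about $K_S^{2}$ that the introduction flags as unsettled. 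Second, your fallback factor $A^{6}\otimes K_S^{-1}$ is \emph{not} base-point-free in this case: it equals $G^{3}\otimes\delta=K_S\otimes\delta'$ for torsion $\delta,\delta'$, which by Proposition~\ref{dimensionprop} has $h^{0}\leq 1$ and therefore has non-empty base locus (the zero divisor of its unique section, or all of $S$ when $h^0=0$). The Reider computation you offer conflates the bundle itself with the nef bundle $L$ in Reider's statement: to test base-point-freeness of $A^{6}\otimes K_S^{-1}$ one must take $L=A^{6}\otimes K_S^{-2}$, which is numerically trivial here, so Reider gives nothing. Finally, even setting these aside, the ``mixed'' form of [EL] with $p+3$ distinct very ample factors, and a fortiori the refinement allowing one merely globally generated factor, are not results you can cite off the shelf; you acknowledge the second but treat the first as given.

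The paper's proof avoids very ampleness altogether. It writes $A^{6(p+3)}\otimes N=G^{k}\otimes\sigma$ with $k\geq 6(p+3)$ and $\sigma$ torsion, invokes Green's cohomological criterion, and establishes the required vanishings $H^{1}(S,M^{\otimes r}_{G^{k}\otimes\sigma}\otimes G^{ks}\otimes\sigma^{s})=0$ via Theorem~\ref{vanishingthm2}, whose proof runs an induction on $r$ through Castelnuovo--Mumford regularity after factoring $G^{k}\otimes\sigma$ into pieces $G^{k_{j}}$ with $6\leq k_{j}\leq 11$. The only positivity input is that $G^{m}\otimes\delta$ is base-point-free for $m\geq 6$ (the base-point-freeness half of Reider, which does apply since then $L\equiv (m-3)H$ has $L^{2}\geq 9\geq 5$). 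If you want to salvage your strategy, you would need to replace $6$ by $7$ in the exponent of the very ample factors (since $A^{7}$ is very ample by Reider, $L^{2}=16\geq 10$), which yields only the weaker bound $7(p+3)$, or else reprove the needed Koszul vanishing by hand --- which is essentially what the paper does.
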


It follows easily from the definition that any fake projective plane $S$ is of Picard number 1. Moreover, it is known~(see for example~[Y]) that the canonical line bundle can be written as $K_S=G^3\otimes\tau$, where $G$ is an ample generator $G$ of Pic$(S)$ and $\tau$ is some torsion line bundle.  Thus, we have the following immediate corollary of Theorem~\ref{mainthm2}.

\begin{coro}\label{maincoro}
Let $S$ be a fake projective plane. Then $K^m_S$ satisfies the property $N_p$ if $m\geq 2p+6$, where $p\geq 0$. In particular, $K^m_S$ is projectively normal if $m\geq 6$. 
\end{coro}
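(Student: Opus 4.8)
The plan is to deduce the corollary directly from Theorem~\ref{mainthm2} by choosing the line bundles $A$ and $N$ appropriately, using the structural fact recorded just before the statement. First I would recall that on a fake projective plane $S$ we have $\mathrm{Pic}(S)$ of rank one, with ample generator $G$, and that the canonical bundle decomposes as $K_S = G^3 \otimes \tau$ with $\tau$ a torsion line bundle. The key observation is that the property $N_p$ is insensitive to twisting by torsion line bundles in the relevant range, or more practically, that torsion contributions can be absorbed once we have enough positivity; so the heart of the argument is numerical bookkeeping comparing powers of $G$.

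The main step is to apply Theorem~\ref{mainthm2} with $A = G$. Since $G$ is ample, the theorem gives that $G^{6(p+3)} \otimes N$ satisfies $N_p$ for every nef $N$. I would then write $K_S^m = G^{3m} \otimes \tau^m$, and the goal is to realize $K_S^m$ in the form $G^{6(p+3)} \otimes N$ with $N$ nef. Setting $N = G^{3m - 6(p+3)} \otimes \tau^m$, this $N$ is nef precisely when the exponent of $G$ is nonnegative, i.e.\ when $3m - 6(p+3) \geq 0$, equivalently $m \geq 2(p+3) = 2p+6$ (a nonnegative power of the ample generator tensored with a torsion bundle is nef, since torsion bundles are numerically trivial). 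Thus for $m \geq 2p+6$ the bundle $K_S^m$ is exactly of the form covered by Theorem~\ref{mainthm2} and hence satisfies $N_p$.

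For the final assertion, I would specialize to $p = 0$: the property $N_0$ is projective normality, and the bound $m \geq 2p+6$ becomes $m \geq 6$, giving that $K_S^m$ is projectively normal for $m \geq 6$.

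The step I expect to require the most care is the nefness verification for $N$, specifically the handling of the torsion factor $\tau^m$: one must confirm that $G^{k} \otimes \tau^m$ is nef for $k \geq 0$. This follows because $\tau$, being torsion, is numerically trivial, so $G^k \otimes \tau^m$ has the same intersection numbers as $G^k$, which is nef (indeed ample) for $k > 0$ and trivial-hence-nef for $k = 0$; thus no genuine obstacle arises, and the corollary follows immediately from the theorem.
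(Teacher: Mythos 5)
Your proposal is correct and follows exactly the route the paper intends: the paper presents the corollary as an immediate consequence of Theorem~\ref{mainthm2}, obtained by writing $K_S^m=G^{3m}\otimes\tau^m=G^{6(p+3)}\otimes N$ with $N=G^{3m-6(p+3)}\otimes\tau^m$ nef precisely when $m\geq 2p+6$. Your careful remark that torsion bundles are numerically trivial, so $N$ is indeed nef, is the only point needing verification and you handle it correctly.
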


\noindent\textbf{Remark.} We have also obtained results for smaller values of $m$, but those require various additional efforts. In fact, we will see that $K^{2p+2}_S$ (resp. $K^{2p+4}_S$) also satisfies the property $N_p$ for $p\geq 1$ (resp. $p\geq 0$), as a consequence of Theorem~\ref{mainthm} in Section~3. In particular, $K^4_S$ is also projectively normal. Furthermore, motivated by the existence problem for exceptional collections for objects in the derived category of coherent sheaves on a fake projective plane $S$, if we assume that for any ample generator $G$ of Pic$(S)$ and any torsion line bundle $\delta$, we have $h^0(S, G^2\otimes\delta)=0$, a vanishing statement which is a variant of a conjecture mentioned in~[GKMS], then we will be able to show that that $K^{2p+5}_S$ also satisfies the property $N_p$ for $p\geq 0$ (Section~5). Such conjecture is known to be true for fake projective planes with more than three automorphisms. As an intermediate step of the proof, we also obtained some vanishing or almost vanishing results concerning the torsion line bundles of a fake projective plane,
which are of some independent interests.
 
 \begin{prop}
 Let $\delta$ be a torsion line bundle on a fake projective plane $S$ and $G$ an ample generator of $\Pic(S)$.  Then\\
 (a). $h^1(S,\delta)=0$;\\
 (b). $h^0(S, G\otimes\delta)\leq 1$, $h^0(S, G^2\otimes\delta)\leq 2$ and $h^0(S, G^3\otimes\delta)\leq 1$.
 \end{prop}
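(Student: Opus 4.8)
The plan is to reduce the whole proposition to Riemann--Roch together with two vanishing inputs: Kodaira vanishing for bundles of the shape $K_S\otimes(\text{ample})$, and statement (a) itself, which I regard as the genuine obstacle. Throughout I use the numerical data of a fake projective plane: $\chi(\cO_S)=1$, $q=p_g=0$, $K_S^2=9$, and $K_S=G^3\otimes\tau$ with $G^2=1$ and $G\cdot K_S=3$. For any torsion $\eta$ the class $G^m\otimes\eta$ is numerically $mG$, so Riemann--Roch gives $\chi(G^m\otimes\eta)=1+\tfrac12 m(m-3)$; in particular this equals $0,0,1,3$ for $m=1,2,3,4$.

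For part (a), I first observe that if $\delta\neq\cO_S$ then $h^0(S,\delta)=0$, since a nonzero section would cut out an effective nonzero divisor $D$ with $D\cdot G=0$, impossible for $G$ ample. Hence by Riemann--Roch ($\chi(\delta)=1$) and Serre duality the claim $h^1(S,\delta)=0$ is equivalent to $h^0(S,K_S\otimes\delta^{-1})=1$, and this cannot be extracted from Riemann--Roch alone. I would instead use that a torsion bundle carries a flat unitary structure, so that the Kähler identities for the unitary local system yield the Hodge symmetry $h^1(S,\delta)=h^0(S,\Omega^1_S\otimes\delta^{-1})$. The right-hand side then vanishes by a Bochner argument on the ball quotient: a holomorphic $1$-form with values in a flat unitary bundle is, on a compact Kähler--Einstein surface of negative Ricci curvature, forced to be parallel and hence zero. (Equivalently, one may invoke the known vanishing of $b_1$ for the relevant finite étale covers of $S$.) This is precisely where the geometry of $S$ as a ball quotient, rather than mere surface-of-general-type numerics, really enters.

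Granting (a), part (b) is comparatively formal, and the cleanest bound is the middle one. Suppose $h^0(S,G^2\otimes\delta)\geq 3$ and pick three independent sections $t_0,t_1,t_2$; multiplication of sections defines a linear map $\mu:\mathrm{Sym}^2\langle t_0,t_1,t_2\rangle\to H^0(S,G^4\otimes\delta^{2})$ whose kernel is exactly the degree-two part of the ideal of the closure of the image of $\phi=[t_0:t_1:t_2]:S\dashrightarrow\bP^2$. Since the $t_i$ are independent this image is an irreducible nondegenerate subvariety of $\bP^2$, hence either $\bP^2$ or a plane curve of degree $\geq 2$, so the degree-two part of its ideal is at most one-dimensional; therefore $\mathrm{rank}\,\mu\geq 6-1=5$ and $h^0(S,G^4\otimes\delta^2)\geq 5$. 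But Kodaira vanishing applied to $G^4\otimes\delta^2=K_S\otimes(G\otimes\tau^{-1}\otimes\delta^2)$, an ample twist of $K_S$, together with Riemann--Roch gives $h^0(S,G^4\otimes\delta^2)=3$, a contradiction; hence $h^0(S,G^2\otimes\delta)\leq 2$. The first bound follows by the same device one step down: if $h^0(S,G\otimes\delta)\geq 2$, then squaring two independent sections produces three independent sections of $G^2\otimes\delta^2$, contradicting the bound just proved. Finally, for $G^3\otimes\delta$ I invoke (a): by Serre duality $h^1(S,G^3\otimes\delta)=h^1(S,\tau\otimes\delta^{-1})=0$, while $h^2(S,G^3\otimes\delta)=h^0(S,\tau\otimes\delta^{-1})\geq 0$, so $h^0(S,G^3\otimes\delta)=\chi(G^3\otimes\delta)+h^1-h^2=1-h^0(S,\tau\otimes\delta^{-1})\leq 1$.

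The hard part is unquestionably (a); once that vanishing is in hand, the remainder is bookkeeping with Riemann--Roch, Kodaira vanishing, and the elementary fact that an irreducible nondegenerate subvariety of $\bP^2$ imposes at most one condition on conics. The only subtlety in part (b) is to justify the identification of $\ker\mu$ with the degree-two part of the ideal of the image despite $\phi$ being merely a rational map; this is routine, since a section of a line bundle vanishing off the proper base locus of $\phi$ vanishes identically.
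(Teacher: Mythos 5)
Your part (b) is sound and is essentially the paper's argument: the key numerical input is $h^0(S,G^4\otimes\delta^2)=3$ from Riemann--Roch plus Kodaira vanishing, played against a lower bound for the image of the multiplication map $\mathrm{Sym}^2H^0(S,G^2\otimes\delta)\to H^0(S,G^4\otimes\delta^2)$. The paper gets that lower bound by quoting the Remmert--Van de Ven inequality $h^0(L^2)\ge 2h^0(L)-1$, whereas you reprove the case you need by bounding the conics through the closure of the image of $[t_0:t_1:t_2]$; the deductions for $G\otimes\delta$ and for $G^3\otimes\delta$ (Riemann--Roch plus Serre duality plus part (a)) are the same as in the paper.

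The genuine gap is in part (a), which you rightly identify as the crux but then dispose of with an argument that fails. A holomorphic $1$-form with values in a flat unitary bundle on a compact K\"ahler--Einstein manifold of \emph{negative} Ricci curvature is not forced to be parallel or zero: the Bochner curvature term for sections of $\Omega^1_S\otimes E$ with $E$ flat has the wrong sign in the negatively curved case. Already for a compact hyperbolic Riemann surface one has $h^0(\Omega^1)=g\ge 2$, and compact ball quotients with $b_1>0$ exist in abundance; negative curvature kills holomorphic vector fields, not holomorphic forms. Your parenthetical fallback --- ``invoke the known vanishing of $b_1$ for the relevant finite \'etale covers of $S$'' --- is precisely the content that must be supplied, and it is not a general theorem one can cite: the covers in question are the ones attached to the kernels of $\pi_1(S)\to H_1(S,\bZ)/C$, these subgroups need not be congruence subgroups, so Rogawski-type vanishing does not apply off the shelf, and there is no a priori reason for a finite cover of a ball quotient to have $b_1=0$. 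The paper proves (a) by writing $\pi_*\cO_{M_\delta}=\bigoplus_j\cO_S(\delta^j)$, reducing everything to $b_1(S_0)=0$ for the cover $S_0$ attached to $[\Pi,\Pi]$, and then verifying that $[\Pi,\Pi]/[[\Pi,\Pi],[\Pi,\Pi]]$ is finite \emph{case by case} for all $50$ fundamental groups in the Cartwright--Steger list by a Magma computation. Without that verification (or some substitute for it), part (a) --- and hence also your bound $h^0(S,G^3\otimes\delta)\le 1$, which uses it --- remains unproven.
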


Finally, we mention the following theorem for an arbitrary base-point-free line bundle on a fake projective plane, which will be established in Section 4.

\begin{theo}\label{mainthm3}
Let $S$ be a fake projective plane and $B$ be a base-point-free line bundle on $S$. Then, $B^m$ is projectively normal if $m\geq 2$. Moreover, for $p\geq 1$, $B^m$ satisfies the property $N_p$ if $m\geq p+1$. 
\end{theo}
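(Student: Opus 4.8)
The engine will be the syzygy-bundle criterion for property $N_p$. For a base-point-free line bundle $L$ on $S$, let $M_L$ be the kernel bundle in the evaluation sequence
$$0\to M_L\to H^0(S,L)\otimes\cO_S\to L\to 0.$$
Recall (cf.~[EL]) that a globally generated $L$ satisfies $N_p$ once
$$H^1\!\left(S,\wedge^{j}M_L\otimes L^{r}\right)=0\qquad(1\le j\le p+1,\ r\ge 1),$$
the case $j=1$ being projective normality. With $L=B^m$ the theorem reduces to a family of first-cohomology vanishings, and the whole problem is to control the bundles $\wedge^{j}M_{B^m}\otimes B^{mr}$.

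As a preliminary I would pin down $B$. Base-point-freeness forces $B=G^{a}\otimes\tau$ with $a\ge 3$: an ample bundle with $h^0\le 1$ cannot be base-point-free, which rules out $a=1$ by Proposition 1(b), while the self-intersection $(G^{2}\otimes\tau)^2=4>0$ means two general members of a pencil in $|G^2\otimes\tau|$ meet, forcing base points and ruling out $a=2$. For $a\ge 3$ the bundle $B^{k}\otimes K_S^{-1}=G^{ak-3}\otimes\tau^{k}\otimes\tau_0^{-1}$ is nef and big for every $k\ge 1$, so Kawamata--Viehweg gives $H^i(S,B^{k})=0$ for $i>0$ and $k\ge 1$; the single borderline case $B=G^3\otimes\tau$ is exactly where $H^1(S,B)=H^1(S,K_S\otimes\tau\tau_0^{-1})$ is supplied by Proposition 1(a). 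Together with $H^1(S,\cO_S)=H^2(S,\cO_S)=0$, these are the only cohomological inputs.

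For projective normality ($m\ge 2$) the $j=1$ vanishings amount to surjectivity of the multiplication maps $H^0(B^m)\otimes H^0(B^{mr})\to H^0(B^{m(r+1)})$. The key observation is that $B^m$ is $0$-regular with respect to the base-point-free bundle $B$ whenever $m\ge 2$, since $H^i(S,B^{m-i})=0$ for $i\ge1$ reduces to $H^1(S,B^{m-1})=0$ and $H^2(S,B^{m-2})=0$, the latter being precisely $H^2(S,\cO_S)=0$ when $m=2$. Mumford's theorem then makes each $H^0(B^{k})\otimes H^0(B)\to H^0(B^{k+1})$ surjective for $k\ge m$, and chaining these (using that products of $m$ sections of $B$ lie in $H^0(B^m)$) yields the required surjectivity, hence $N_0$.

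For $j\ge 2$, and hence for $N_p$ with $m\ge p+1$, I would descend through the exterior-power sequences
$$0\to\wedge^{j+1}M_{B^m}\to\wedge^{j+1}H^0(S,B^m)\otimes\cO_S\to\wedge^{j}M_{B^m}\otimes B^m\to 0,$$
twisting by powers of $B^m$ and peeling off the $H^1$ against the groups $H^1$ and $H^2$ of line bundles, which vanish by the preliminaries and by $H^{i}(S,\cdot)=0$ for $i\ge 3$; each additional wedge power consumes one more power of $B$, and this bookkeeping is what produces the bound $m\ge p+1$. The main obstacle is genuine: the naive estimate, measuring regularity of $B^m$ against $B^m$ itself, fails because $H^2(S,B^{-m})=h^0(S,K_S\otimes B^m)\neq 0$---the same borderline obstruction underlying the open problem of the very-ampleness of $K_S^2$. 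The way around it is the device already used for $N_0$, namely to measure regularity relative to $B$ rather than $B^m$ and to exploit the vanishing $H^1(S,\cO_S)=H^2(S,\cO_S)=0$ special to fake projective planes; this is exactly what improves the general regular-surface bound $m\ge p+2$ to the sharp $m\ge p+1$, with the residual torsion twists $\tau^{k}$ absorbed by Proposition 1.
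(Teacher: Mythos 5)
Your overall strategy coincides with the paper's: classify $B$ as $G^a\otimes\tau$ using the section bounds of Proposition~\ref{dimensionprop}, deduce the vanishing of $H^1$ and $H^2$ for all positive powers of $B$, and then feed Green's criterion through Castelnuovo--Mumford regularity measured against $B$ itself. But your classification step stops too early. Base-point-freeness in fact forces $a\geq 4$: the case $a=3$ is excluded for exactly the same reason as $a=1$, namely $h^0(S,G^3\otimes\delta)\leq 1$ (Proposition~\ref{dimensionprop}), so there is no ``borderline case'' $B=G^3\otimes\tau$ to handle. Your treatment of that phantom case is moreover incomplete: you supply $H^1(S,B)=0$ via Proposition 1(a) and Serre duality but say nothing about $H^2(S,B)\cong H^0(S,K_S\otimes B^{-1})^*$, which is precisely what Kawamata--Viehweg fails to give when $a=3$ and $k=1$. (Your exclusion of $a=2$ also tacitly uses $h^0(S,G^2\otimes\tau)=2$ from Proposition~\ref{dimensionprop}; if $h^0\geq 3$ were possible, two members of a sub-pencil meeting would not contradict base-point-freeness.) Once $a\geq 4$ is known, Lemma~\ref{vanishinglem} gives all the line-bundle vanishings uniformly, and the paper's proof is then the two-line observation that Proposition~\ref{vanishingprop}, Theorem~\ref{vanishingthm} and Theorem~\ref{mainthm} apply verbatim to $B$.

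The more serious gap is that the part of the theorem carrying content for $p\geq 1$ is only gestured at. Twisting the sequence $0\to\wedge^{j+1}M_{B^m}\to\wedge^{j+1}H^0(B^m)\otimes\cO_S\to\wedge^{j}M_{B^m}\otimes B^m\to 0$ by $B^{ms}$ does not reduce $H^1(\wedge^{j+1}M_{B^m}\otimes B^{ms})$ to ``$H^1$ and $H^2$ of line bundles'': it reduces it to $H^1(B^{ms})$ together with the surjectivity of $\wedge^{j+1}H^0(B^m)\otimes H^0(B^{ms})\to H^0(\wedge^{j}M_{B^m}\otimes B^{m(s+1)})$, and checking that surjectivity by Castelnuovo--Mumford requires vanishing of $H^1$ and $H^2$ of the vector bundles $\wedge^{j}M_{B^m}\otimes B^{\ell}$ themselves for various $\ell$. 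Controlling those is the entire difficulty: one needs the double induction of Theorem~\ref{vanishingthm}, which tracks $H^1(M^{\otimes r}_{B^k}\otimes B^{\ell})=0$ under $k+\ell\geq r+2$, $\ell\geq r$, \emph{together with} the companion $H^2(M^{\otimes r}_{B^k}\otimes B^{\ell})=0$ for $\ell+1\geq r$ (an ingredient your sketch omits entirely), and whose $r=1$ case exploits the symmetry of $k$ and $\ell$ in the multiplication map to reach $k+\ell\geq 3$. Your sentence ``each additional wedge power consumes one more power of $B$, and this bookkeeping is what produces the bound $m\geq p+1$'' is exactly the claim that this induction closes up at the stated threshold; it is the part that has to be proved rather than announced, and as written the proposal does not prove it.
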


 This follows from a vanishing result on the unramified cover of each fake projective plane defined by the commutator subgroup of its fundamental group.

The results in this paper are established through the well known cohomology criterion given by Green~[G]. The details of Green's criterion will be recalled in Section 2. To show that the cohomology groups appearing in Green's theorem vanish, our main tool is the classical result of Castelnuovo-Mumford on the surjectivity of certain tensor product maps on various spaces of global sections, which has been used already in the literature, in particular in the work of Gallego-Purnaprajna~[GP] for surfaces under various assumptions and for the properpty $N_p$, $p\geqslant 1$.  Our goal is to provide as sharp an estimate as possible for fake projective planes, especially for lower values of $p$.  Careful analysis is needed to make sure that there is no numerical gap in the range of possible powers of the line bundle satisfying the theorems of this paper, (see the remark before Theorem \ref{vanishingthm2}).
We have also made an
effort to make the the presentation reasonably self-contained.

\section{Cohomological criterion for the property $N_p$}

We will first recall in this section the cohomological criterion for the property $N_p$ obtained by Green~[G].

Let $X$ be a complex projective manifold and $L$ be an ample and base-point-free line bundle on $X$. Denote by $\mathcal H^0(X,L)$ the trivial vector bundle on $X$ whose fibers are $H^0(X,L)$. Then we have the following exact sequence of vector bundles 
$$
	0\rightarrow M_L\rightarrow \mathcal H^0(X,L)\rightarrow L\rightarrow 0,
$$
where the fiber of $M_L$ at $x\in X$ is the subspace in $H^0(X,L)$ consisting of the sections vanishing at $x$. Using the theory of Koszul cohomology, Green~[G] obtained the following 
\begin{theo}[Green]
The line bundle $L$ satisfies the property $N_p$ if $$H^1(X, \wedge^r M_L\otimes L^s)=0$$ for $0\leq r\leq p+1$ and $s\geq 1$.
\end{theo}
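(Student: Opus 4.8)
The plan is to translate the property $N_p$ into the vanishing of Koszul cohomology and then to compute that cohomology in terms of the bundle $M_L$. Write $V=H^0(X,L)$ and let $R=\bigoplus_{q\ge 0}H^0(X,L^q)$, regarded as a graded module over $\operatorname{Sym}(V)$. By definition, $L$ satisfies $N_p$ precisely when the minimal graded free resolution of $R$ is linear up to the $p$-th step; equivalently, writing $K_{i,q}(X,L)$ for the Koszul cohomology group computed as the cohomology of $\wedge^{i+1}V\otimes H^0(L^{q-1})\to\wedge^{i}V\otimes H^0(L^{q})\to\wedge^{i-1}V\otimes H^0(L^{q+1})$, the condition $N_p$ is equivalent to $K_{i,q}(X,L)=0$ for all $0\le i\le p$ and all $q\ge 2$. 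First I would recall this dictionary, observing that $K_{0,q}=0$ for $q\ge 2$ is exactly projective normality ($N_0$) and that adding $K_{1,q}=0$ for $q\ge 2$ records generation of the ideal by quadrics ($N_1$).

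The second step is to bring in $M_L$. Since $L$ is base-point-free, the evaluation sequence $0\to M_L\to V\otimes\mathcal O_X\to L\to 0$ is exact, and because $L$ has rank one the exterior-power filtration of $\wedge^{k}(V\otimes\mathcal O_X)$ collapses to the short exact sequences
$$0\to \wedge^{k}M_L\to \wedge^{k}V\otimes\mathcal O_X\to \wedge^{k-1}M_L\otimes L\to 0.$$
Twisting by $L^q$ and passing to cohomology, I would extract two facts. Taking $k=i$ and twisting by $L^q$ gives an injection $H^0(\wedge^{i}M_L\otimes L^q)\hookrightarrow \wedge^{i}V\otimes H^0(L^q)$ whose image is the kernel of the Koszul differential $\delta\colon \wedge^{i}V\otimes H^0(L^q)\to\wedge^{i-1}V\otimes H^0(L^{q+1})$; here one uses that the inclusion $H^0(\wedge^{i-1}M_L\otimes L^{q+1})\hookrightarrow\wedge^{i-1}V\otimes H^0(L^{q+1})$ is injective, so that $\delta$ factors through $H^0(\wedge^{i-1}M_L\otimes L^{q+1})$ with the same kernel.

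The third step is the comparison. Under the identification above, the incoming Koszul differential $\wedge^{i+1}V\otimes H^0(L^{q-1})\to\wedge^{i}V\otimes H^0(L^q)$ factors through the natural map $u\colon\wedge^{i+1}V\otimes H^0(L^{q-1})\to H^0(\wedge^{i}M_L\otimes L^q)$ coming from the sequence with $k=i+1$ twisted by $L^{q-1}$, whence $K_{i,q}(X,L)\cong\operatorname{coker}(u)$. The long exact cohomology sequence of that same twisted sequence reads
$$\wedge^{i+1}V\otimes H^0(L^{q-1})\xrightarrow{\,u\,} H^0(\wedge^{i}M_L\otimes L^q)\to H^1(\wedge^{i+1}M_L\otimes L^{q-1})\to \wedge^{i+1}V\otimes H^1(L^{q-1}),$$
which exhibits $\operatorname{coker}(u)$, and hence $K_{i,q}(X,L)$, as a subspace of $H^1(X,\wedge^{i+1}M_L\otimes L^{q-1})$. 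Consequently, if $H^1(X,\wedge^{r}M_L\otimes L^s)=0$ for $1\le r\le p+1$ and $s\ge 1$, then setting $r=i+1$ and $s=q-1$ forces $K_{i,q}(X,L)=0$ for all $0\le i\le p$ and $q\ge 2$, which is $N_p$; the additional hypothesis at $r=0$, namely $H^1(X,L^s)=0$, is harmless and is recorded only for uniformity. I expect the genuine obstacle to lie in the bookkeeping of the second and third steps: verifying that the connecting maps in the cohomology long exact sequences literally coincide with the algebraically defined Koszul differentials, and that the exterior-power filtration degenerates to the displayed short exact sequences. Once these identifications are pinned down, the conclusion is immediate from the long exact sequence.
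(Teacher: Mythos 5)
Your argument is correct and is the standard derivation of Green's criterion via Koszul cohomology: the dictionary $N_p\Leftrightarrow K_{i,q}(X,L)=0$ for $0\le i\le p$ and $q\ge 2$, followed by the identification $K_{i,q}\cong\operatorname{coker}(u)\hookrightarrow H^1(X,\wedge^{i+1}M_L\otimes L^{q-1})$ coming from the twisted sequence $0\to\wedge^{i+1}M_L\to\wedge^{i+1}V\otimes\mathcal O_X\to\wedge^{i}M_L\otimes L\to 0$. The paper itself gives no proof of this statement (it is quoted from Green [G]), so there is nothing internal to compare against; your sketch reproduces the argument of [G] and of Ein--Lazarsfeld [EL, \S 1]. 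The two points you flag as remaining bookkeeping --- that the connecting maps coincide with the algebraic Koszul differentials, and that the exterior-power filtration collapses to the displayed short exact sequences because $L$ has rank one --- are routine verifications carried out in those references, and you are also right that the $r=0$ case of the hypothesis is redundant for the sufficiency direction.
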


Over a field of characteristic zero, $\wedge^r M_L\subset M_L^{\otimes r}$ is a direct summand and therefore it is a common practice to simply verify the vanishing of $H^1(X, M_L^{\otimes r}\otimes L^s)$ for studying the property $N_p$. The main tool for us to achieve this is the following theorem, pertaining to the so-called \textit{Castelnuovo-Mumford regularity}~(cf. [L]).

\begin{theo}[Castelnuovo-Mumford] \label{regularity}
Let $E$ be a base-point-free line bundle on a projective variety $M$ and $\mathcal F$ be a coherent sheaf on $M$.  Suppose that $H^i(M,\mathcal F\otimes E^{-i})=0$ for $i\geqslant 1$.  Then
the multiplication mapping 
$$H^0(M,\mathcal F\otimes E^j)\otimes H^0(M,E)\rightarrow H^0(M,\mathcal F\otimes E^{j+1})$$
is surjective for all $j\geqslant 0$.
\end{theo}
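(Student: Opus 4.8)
The plan is to establish this as the classical Castelnuovo--Mumford regularity statement, by dévissage through hyperplane sections cut out by $E$. I would first recast the hypothesis in the language of regularity: say that a coherent sheaf $\mathcal{F}$ is \emph{$m$-regular} (with respect to $E$) if $H^i(M,\mathcal{F}\otimes E^{m-i})=0$ for all $i\geqslant 1$, so the hypothesis is exactly that $\mathcal{F}$ is $0$-regular. The remark that makes the induction on $j$ collapse is the tautology that $\mathcal{F}$ is $j$-regular if and only if $\mathcal{F}\otimes E^{j}$ is $0$-regular. Granting that $0$-regularity propagates upward, the whole theorem then reduces to the single assertion that for \emph{every} $0$-regular sheaf $\mathcal{G}$ the multiplication map $H^0(M,\mathcal{G})\otimes H^0(M,E)\to H^0(M,\mathcal{G}\otimes E)$ is surjective, applied to $\mathcal{G}=\mathcal{F}\otimes E^{j}$.

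Because $E$ is base-point-free and the ground field is infinite, I can pick a general section $s\in H^0(M,E)$ whose zero divisor $D$ avoids the finitely many associated points of $\mathcal{F}$; multiplication by $s$ is then injective on $\mathcal{F}$ and produces, after twisting, the short exact sequences
\[
0\to \mathcal{F}\otimes E^{k}\xrightarrow{\,\cdot s\,}\mathcal{F}\otimes E^{k+1}\to \mathcal{F}_D\otimes (E|_D)^{k+1}\to 0,
\]
where $\mathcal{F}_D=\mathcal{F}|_D$. A chase through the associated long exact cohomology sequences shows at once that the $0$-regularity of $\mathcal{F}$ forces the $0$-regularity of $\mathcal{F}_D$ on the lower-dimensional $D$ relative to $E|_D$ (the descent step), and, fed back in, that $0$-regularity ascends to $m$-regularity for all $m\geqslant 0$ by induction on $\dim M$; in particular $H^1(M,\mathcal{F})=0$ and the restriction $H^0(M,\mathcal{F})\to H^0(D,\mathcal{F}_D)$ is surjective.

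For the surjectivity of multiplication I would induct on $\dim\operatorname{supp}\mathcal{F}$, the zero-dimensional case being immediate since there $E$ is trivial and a base-point-free system contains a nowhere-vanishing section. In the inductive step the displayed sequence with $k=0$, together with $H^1(M,\mathcal{G})=0$, identifies the cokernel of multiplication by $s$ with $H^0(D,\mathcal{G}_D\otimes E|_D)$; as $s\cdot H^0(M,\mathcal{G})$ already lies in the image of the multiplication map, it is enough to see that this image surjects onto the cokernel. That composite factors through the restrictions $H^0(M,\mathcal{G})\to H^0(D,\mathcal{G}_D)$ and $H^0(M,E)\to H^0(D,E|_D)$ followed by the multiplication map on $D$, which is surjective by the inductive hypothesis. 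The one genuinely delicate point — and where a naive argument breaks down — is that $H^0(M,E)\to H^0(D,E|_D)$ need not be surjective, since $H^1(M,\mathcal{O}_M)$ can be nonzero. I expect this to be the main obstacle, and I would remove it by strengthening the inductive statement to allow an arbitrary base-point-free linear subsystem $V\subseteq H^0(M,E)$ in place of the full space of sections: the restricted system $V|_D$ is automatically base-point-free on $D$ and $V\to V|_D$ is surjective by construction, so the relevant tensor product of restriction maps is surjective and the induction closes. Taking $V=H^0(M,E)$ at the end recovers the stated theorem.
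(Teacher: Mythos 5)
The paper does not prove this statement at all: it is quoted as a classical black box, with a pointer to Lazarsfeld's book [L] (Theorem 1.8.5 there), and is then used as the engine for the vanishing theorems in Sections 3--5. So there is no in-paper argument to compare yours against; the relevant comparison is with the standard proof in the cited source, and your sketch is essentially that proof. The reformulation in terms of $0$-regularity, the reduction of the whole statement to the single multiplication map $H^0(\mathcal{G})\otimes H^0(E)\to H^0(\mathcal{G}\otimes E)$ for $0$-regular $\mathcal{G}$ via the ascent of regularity, and the d\'evissage through a general divisor $D\in|E|$ missing the associated points of $\mathcal{F}$ are exactly Mumford's argument as generalized by Lazarsfeld from $\mathcal{O}_{\mathbb{P}^n}(1)$ to an arbitrary base-point-free $E$. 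You have also correctly isolated the one place where a naive transcription of Mumford's proof fails in this generality, namely that $H^0(M,E)\to H^0(D,E|_D)$ need not be onto when $H^1(M,\mathcal{O}_M)\neq 0$ (very much the case for a fake projective plane's covers, though $H^1(S,\mathcal{O}_S)=0$ for $S$ itself), and your fix --- strengthening the induction to an arbitrary base-point-free subsystem $V\subseteq H^0(M,E)$, so that $V\to V|_D$ is tautologically surjective and $V|_D$ is still base-point-free --- is the standard and correct one. Two small points of care: in the strengthened induction the divisor $D$ must be cut out by a general section $s$ lying in $V$ itself (not merely in $H^0(M,E)$), so that $s\cdot H^0(M,\mathcal{G})$ is contained in the image of $V\otimes H^0(M,\mathcal{G})$; and the induction should be phrased for coherent sheaves on a projective scheme (equivalently, on $M$ with support of smaller dimension), since $D$ need not be a variety. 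Neither affects the validity of the argument.
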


\section{Vanishing theorems and proofs of the main theorems}

In what follows, we let $S$ be a fake projective plane. We recall some known facts about fake projective planes, which can be found in the survey articles by R\'emy [R\'em] and Yeung~[Y]. First of all, the Picard number of $S$ is 1 and we will fix an ample generator $G$ of $\textrm{Pic}(S)$. We have then $G\cdot G=1$. Moreover, if $K_S$ denotes the canonical line bundle, then $K_S=G^3\otimes\tau$, for some torsion line bundle $\tau$. 

\begin{lemm}\label{vanishinglem}
	Let $A$ be an ample line bundle and $\sigma$ be a torsion line bunde on $S$. Then, $H^1(S, A^k\otimes\sigma)=H^2(S,A^k\otimes\sigma)=0$ for $k\geq 4$. 
\end{lemm}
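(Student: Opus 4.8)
The plan is to reduce the statement to Kodaira vanishing by twisting away the canonical bundle. Recall that the fake projective plane $S$ has Picard number one with ample generator $G$ satisfying $G\cdot G=1$, and that its irregularity vanishes, so the numerically trivial line bundles are exactly the torsion ones. Since $A$ is ample and the N\'eron--Severi group is $\mathbb{Z}\cdot G$, I would first record that $A\equiv aG$ numerically for some integer $a\geq 1$, the positivity being forced by ampleness, and that $K_S\equiv 3G$ because $K_S=G^3\otimes\tau$ with $\tau$ torsion.

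The key move is to set $L:=A^k\otimes\sigma\otimes K_S^{-1}$, so that $A^k\otimes\sigma=K_S\otimes L$. As $\sigma$ and $\tau$ are torsion, hence numerically trivial, one computes $L\equiv (ak-3)G$. For $k\geq 4$ and $a\geq 1$ this gives $ak-3\geq 1>0$. Because $S$ has Picard number one, the Nakai--Moishezon criterion shows that any line bundle of positive numerical degree against $G$ is ample: if $L\equiv dG$ with $d>0$ then $L\cdot L=d^2>0$ and $L\cdot C=dc>0$ for every curve $C\equiv cG$. Therefore $L$ is ample.

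Applying Kodaira vanishing to the ample bundle $L$ then yields $H^i(S,K_S\otimes L)=H^i(S,A^k\otimes\sigma)=0$ for all $i\geq 1$, in particular for $i=1$ and $i=2$, which is the assertion. The vanishing of $H^2$ could alternatively be obtained directly from Serre duality, since $H^2(S,A^k\otimes\sigma)\cong H^0(S,K_S\otimes A^{-k}\otimes\sigma^{-1})^{*}$ and the bundle $K_S\otimes A^{-k}\otimes\sigma^{-1}\equiv (3-ak)G$ has negative degree against the ample $G$ for $k\geq 4$, so it has no nonzero section. The only genuine subtlety, and the source of the threshold $k\geq 4$, is the positivity of the twist $L$: when $k=3$ and $a=1$ the bundle $L$ becomes numerically trivial (a torsion bundle), Kodaira vanishing no longer applies, and the conclusion can in fact fail. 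Thus the heart of the proof is the elementary numerical bookkeeping on $S$ that guarantees $\deg_G L\geq 1$ precisely when $k\geq 4$.
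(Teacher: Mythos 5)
Your proposal is correct and follows essentially the same route as the paper: both write $A^k\otimes\sigma$ as $K_S$ twisted by a line bundle numerically equivalent to $(mk-3)G$, observe that this twist is ample for $k\geq 4$ (torsion factors being numerically irrelevant), and conclude by Kodaira vanishing. Your explicit Nakai--Moishezon justification of ampleness and the Serre-duality aside for $H^2$ are just slightly more detailed versions of what the paper leaves implicit.
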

\begin{proof}
Since $A$ is ample and the Picard number of $S$ is 1, we can write $A=G^m\otimes\epsilon$ for some $m\in\mathbb N^+$ and some torsion line bundle $\epsilon$. Then, for $i=1,2$ and $k\geq 4$,
$$
	H^i(S, A^k\otimes\sigma)=H^i(S, K_S\otimes G^{mk-3}\otimes\tau^{-1}\otimes\epsilon^k\otimes\sigma)=0
$$
by Kodaira Vanishing Theorem as $G^{mk-3}$ is ample.
\end{proof}

\begin{prop}\label{vanishingprop}
Let $B$ be an ample line bundle on $S$ such that $B\otimes K_S^{-2}$ is nef, or equivalently, $B=G^m\otimes\sigma$ for some $m\geq 6$ and some torsion line bundle $\sigma$. Then $B^k$ is base-point-free for $k\geq 1$ and $H^1(S, B^k)=H^2(S, B^k)=0$ for $k\geq 0$. 
\end{prop}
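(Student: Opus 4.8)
The plan is to reduce both assertions to standard surface theory by exploiting the rank-one Picard group. First I would record the stated equivalence: since $K_S=G^3\otimes\tau$ gives $K_S^2=G^6\otimes\tau^2$, the bundle $B\otimes K_S^{-2}=G^{m-6}\otimes\sigma\otimes\tau^{-2}$ is numerically $(m-6)G$; as torsion bundles are numerically trivial and $G$ is the ample generator of a rank-one Picard group, this is nef precisely when $m\geq 6$. I would then write $B^k=G^{mk}\otimes\sigma^k$ throughout and record the key geometric feature of a fake projective plane that drives the argument: every nonzero effective divisor $E$ on $S$ satisfies $E\equiv dG$ with $d\geq 1$ (since $E\cdot G=d>0$ and $d\in\mathbb{Z}$), so that $E^2=d^2\geq 1$.

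For the cohomology vanishing I would split into $k\geq 1$ and $k=0$. Writing $B^k=G^{mk}\otimes\sigma^k$ with $mk\geq 6\geq 4$, the vanishing $H^1(S,B^k)=H^2(S,B^k)=0$ is immediate from Lemma~\ref{vanishinglem}, applied with the ample generator $G$ as the ample bundle, exponent $mk$, and torsion bundle $\sigma^k$ (equivalently, $B^k\otimes K_S^{-1}$ is numerically $(mk-3)G$ with $mk-3\geq 3>0$, hence ample, so Kodaira vanishing applies directly). For $k=0$ I would instead use that $S$ has the Betti and Hodge numbers of $\mathbb{P}^2$: $H^1(S,\mathcal{O}_S)=0$ since the irregularity $q(S)$ vanishes, and $H^2(S,\mathcal{O}_S)\cong H^0(S,K_S)^*=0$ by Serre duality, as $K_S$ has no sections.

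For base-point-freeness I would apply Reider's theorem to $L:=B^k\otimes K_S^{-1}$, writing $B^k=K_S\otimes L$. Here $L$ is ample, hence nef, and numerically $(mk-3)G$, so $L^2=(mk-3)^2\geq 9\geq 5$ for every $k\geq 1$; the binding case is $k=1$, where the requirement $L^2\geq 5$ forces exactly $m\geq 6$, matching the hypothesis. Reider's theorem then asserts that a base point of $|K_S+L|$ would produce an effective divisor $E$ through it with $(L\cdot E,E^2)=(0,-1)$ or $(1,0)$. But by the self-intersection positivity noted above, any nonzero effective $E$ has $E^2\geq 1$ and $L\cdot E=(mk-3)\,d\geq 3$, so neither alternative can occur; hence $|B^k|$ is base-point-free.

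The only real obstacle is this last step: one must confirm that Reider's two exceptional configurations are genuinely excluded, and this is exactly where the Picard-number-one geometry of a fake projective plane—forcing every effective divisor to have strictly positive self-intersection—is indispensable. The vanishing statements and the equivalence of hypotheses are then routine consequences of Lemma~\ref{vanishinglem} (Kodaira vanishing) and the $\mathbb{P}^2$-like Hodge numbers.
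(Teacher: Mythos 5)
Your proof is correct and follows essentially the same route as the paper's: write $B^k=K_S\otimes L$ with $L$ numerically equivalent to $(mk-3)G$, apply Reider's theorem (using $L^2=(mk-3)^2\geq 9$) for base-point-freeness, and invoke Lemma~\ref{vanishinglem} for $k\geq 1$ and the vanishing of $q(S)$ and $p_g(S)$ for $k=0$. The only difference is that you explicitly rule out Reider's exceptional configurations $(L\cdot E,E^2)=(0,-1)$ or $(1,0)$ via the Picard-number-one property, a verification the paper leaves implicit in its citation of Reider.
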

\begin{proof}

By hypotheses, we can write $B^k=K_S\otimes L$, where $L=G^{m'} \otimes \sigma'$ for some $m'\geq 3$ and some torsion line bundle $\sigma'$. Since $L\cdot L =(m')^2\geq 9$, using the classical results of Reider~[Rei], we see that $B^k$ is base-point-free. For $k\geq 1$, the vanishing of $H^1(S, B^k)$ and $H^2(S, B^k)$ follows directly from Lemma~\ref{vanishinglem}, while for $k=0$, it follows from the fact that $H^1(S,\mathcal O_S)=0$ and $H^2(S,\mathcal O_S)\cong H^0(S, K_S)=0$ for any fake projective plane.
\end{proof}

In the rest of this section, we will establish two vanishing theorems for the cohomology groups appearing in Green's criterion for the property $N_p$. As mentioned before, our main tool is the theorem of Castelnuovo-Mumford (Theorem~\ref{regularity}). The basic ideas behind our methods are taken from the work of Gallego-Purnaprajna~[GP]. For base-point-free line bundles on $S$, we streamline the arguments in~[GP] and adapt to the present context for fake projective planes. Geometric properties of fake projective planes are utilized to make sure that conditions for the Castelnuovo-Mumford theorem are satisfied.

From Proposition~\ref{vanishingprop}, if $B$ is an ample line bundle such that $B\otimes K_S^{-2}$ is nef, then $B^k$ is base-point-free for $k\geq 1$. Thus, we have following exact sequence of vector bundles
$$
	0\rightarrow M_{B^k}\rightarrow\mathcal H^0(S, B^k)\rightarrow B^k\rightarrow 0.
$$

Tensoring with $M^{\otimes (r-1)}_{B^k}\otimes B^\ell$ for $r\geq 1$ and $\ell\geq 0$, we have
$$
	0\rightarrow M^{\otimes r}_{B^k}\otimes B^\ell\rightarrow
	\mathcal H^0(S, B^k)\otimes M^{\otimes (r-1)}_{B^k}\otimes B^\ell
	\rightarrow M^{\otimes (r-1)}_{B^k}\otimes B^{k+\ell}\rightarrow 0
	\,\,\,\,\,\,\,\,\,\,\,\,\,\,\,\,\,(\star)
$$
for $k, r\geq 1$ and $\ell\geq 0$.

\begin{theo}\label{vanishingthm}
Let $B$ be an ample line bundle on a fake projective plane $S$ such that $B\otimes K_S^{-2}$ is nef. Let $k\geq 1$ and $\ell\geq 0$. Then, 
\\
\\
(i) $H^1(S, M_{B^k}\otimes B^\ell)=0$ \,\,\,\,if\,\,\,\, $k+\ell\geq 3$;\\
(ii) $H^1(S, M^{\otimes r}_{B^k}\otimes B^\ell)=0$ \,\,\,\,if\,\,\,\, $k+\ell\geq r+2$ and $\ell\geq r\geq 2$;\\
(iii) $H^2(S, M^{\otimes r}_{B^k}\otimes B^\ell)=0$ \,\,\,\,if\,\,\,\, $\ell+1\geq r\geq 1$.
\end{theo}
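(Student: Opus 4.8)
The plan is to prove the three assertions together by induction on $r$, extracting everything from the long exact cohomology sequence of $(\star)$ and feeding in, at each stage, the surjectivity of a suitable multiplication map supplied by the Castelnuovo--Mumford theorem (Theorem~\ref{regularity}). Throughout I would invoke Proposition~\ref{vanishingprop}, which guarantees that $B^k$ is base-point-free for $k\geq 1$ and that $H^1(S,B^j)=H^2(S,B^j)=0$ for all $j\geq 0$; in particular $B=B^1$ is base-point-free, so $E=B$ is an admissible choice in Theorem~\ref{regularity}. The only inputs I need are these vanishings and the base-point-freeness, together with the tautological sequence defining $M_{B^k}$.

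For part (i) I would tensor the defining sequence of $M_{B^k}$ with $B^\ell$ and read off its long exact sequence. Since $H^1(S,B^\ell)=0$, the group $H^1(S,M_{B^k}\otimes B^\ell)$ is identified with the cokernel of the multiplication map $\mu\colon H^0(S,B^k)\otimes H^0(S,B^\ell)\to H^0(S,B^{k+\ell})$. When $\ell=0$ the map $\mu$ is an isomorphism; when $\ell\geq 1$ the hypothesis $k+\ell\geq 3$ forces one of the exponents $k,\ell$ to be at least $2$, and applying Theorem~\ref{regularity} with $E=B$ and $\mathcal F$ equal to whichever of $B^k,B^\ell$ has exponent $\geq 2$ yields the surjectivity of $\mu$, after composing the one-step multiplications by $B$ and factoring through $H^0(S,B^k)\otimes H^0(S,B^\ell)$. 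The required hypotheses $H^1(S,B^{\,j})=0$ and $H^2(S,B^{\,j})=0$ hold by Proposition~\ref{vanishingprop}.

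For parts (ii) and (iii) I would run a simultaneous induction on $r$, where level $r$ draws only on level $r-1$ (with part (i) serving as the anchor at $r=1$). For the $H^2$-statement (iii), the long exact sequence of $(\star)$ shows that $H^2(S,M^{\otimes r}_{B^k}\otimes B^\ell)$ is squeezed between $H^1(S,M^{\otimes(r-1)}_{B^k}\otimes B^{k+\ell})$ and $H^0(S,B^k)\otimes H^2(S,M^{\otimes(r-1)}_{B^k}\otimes B^\ell)$; the first vanishes by (i)/(ii) at level $r-1$ and the second by (iii) at level $r-1$, the numerical hypotheses of both being met once one uses $k\geq 1$ and $\ell\geq r-1$. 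The base case $r=1$ follows directly from $H^1(S,B^{k+\ell})=0$ and $H^2(S,B^\ell)=0$. For the $H^1$-statement (ii), the same sequence shows that $H^1(S,M^{\otimes r}_{B^k}\otimes B^\ell)$ vanishes provided (a) the multiplication map
$$
\alpha\colon H^0(S,B^k)\otimes H^0(S,M^{\otimes(r-1)}_{B^k}\otimes B^\ell)\longrightarrow H^0(S,M^{\otimes(r-1)}_{B^k}\otimes B^{k+\ell})
$$
is surjective, and (b) $H^1(S,M^{\otimes(r-1)}_{B^k}\otimes B^\ell)=0$, the latter being (i)/(ii) at level $r-1$. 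I would obtain the surjectivity in (a) from Theorem~\ref{regularity} applied to $\mathcal F=M^{\otimes(r-1)}_{B^k}\otimes B^\ell$ and $E=B$, composing $k$ one-step multiplications and factoring through $H^0(S,B^k)$; its hypotheses $H^1(S,\mathcal F\otimes B^{-1})=0$ and $H^2(S,\mathcal F\otimes B^{-2})=0$ are precisely (i)/(ii) and (iii) at level $r-1$, applied to the twists $\ell-1$ and $\ell-2$.

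The main obstacle, and the point demanding the most care, is step (a): establishing surjectivity of the maps $\alpha$ and checking that the Castelnuovo--Mumford hypotheses for $\mathcal F=M^{\otimes(r-1)}_{B^k}\otimes B^\ell$ feed back \emph{exactly} into the inductive hypotheses at level $r-1$ without opening a numerical gap. One must carry the two inequalities $k+\ell\geq r+2$ and $\ell\geq r$ simultaneously and verify that the shifts $\ell\mapsto\ell-1$, $\ell\mapsto\ell-2$ and $\ell\mapsto k+\ell$ always land inside the admissible ranges of (i), (ii) and (iii); it is exactly this interleaving of the $H^1$- and $H^2$-vanishings, rather than any single cohomology computation, that constitutes the delicate bookkeeping. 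Everything else is formal manipulation of the long exact sequence of $(\star)$.
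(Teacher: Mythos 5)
Your proposal is correct and follows essentially the same route as the paper: the long exact sequence of $(\star)$, Castelnuovo--Mumford applied with $E=B$ after factoring $H^0(B^k)$ through $H^0(B)^{\otimes k}$, and a simultaneous induction on $r$ in which the $H^1$- and $H^2$-vanishings at level $r-1$ supply exactly the regularity hypotheses at level $r$; your numerical checks (including the symmetric treatment of $k$ and $\ell$ in part (i) and the boundary case $2k+\ell\geq r+1$ in part (iii)) all go through. The only difference is organizational: the paper pre-establishes $H^2(M_{B^k}\otimes B^\ell)=0$ and $H^2(M^{\otimes 2}_{B^k}\otimes B^\ell)=0$ and treats $r=2$ as a separate base case, whereas you fold everything into one induction.
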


\begin{proof}
We first show that 
$H^1(S, M_{B^k}\otimes B^\ell)=0$ if $k+\ell\geq 3$.  In order to simplify the notations, in the proof we will skip the reference to $S$ when writing the cohomology groups.

From the long exact sequence associated to $(\star)$, we get the exact sequence
$$
\begin{matrix}
	\cdots\rightarrow H^0(\mathcal H^0(S,B^k)\otimes B^\ell)\rightarrow H^0(B^{k+\ell})\\ 		   \rightarrow H^1(M_{B^k}\otimes B^\ell)
	\rightarrow H^1(\mathcal H^0(S,B^k)\otimes B^\ell)
	\rightarrow\cdots
\end{matrix}
$$

As $H^1(\mathcal H^0(S,B^k)\otimes B^\ell)\cong H^0(B^k)\otimes H^1(B^\ell)$, it vanishes  by Proposition~\ref{vanishingprop}. Thus, $H^1(M_{B^k}\otimes B^\ell)$ vanishes if the map $H^0(\mathcal H^0(S,B^k)\otimes B^\ell)\rightarrow H^0(B^{k+\ell})$ is surjective. Equivalently, we need the surjectivity for the map
$$
	H^0(B^k)\otimes H^0(B^\ell)\rightarrow H^0(B^{k+\ell})
	\,\,\,\,\,\,\,\,\,\,\,\,\,\,\,\,\,\,\,\,\,\,\,\,\,\,\,\,\,\,\,\,(\sharp)
$$
since $H^0(\mathcal H^0(S,B^k)\otimes B^\ell)\cong H^0(B^k)\otimes H^0(B^\ell)$. Consider the natural maps 
$$
H^0(B)\otimes\cdots\otimes H^0(B)\otimes H^0(B^\ell)\overset{p}{\longrightarrow} H^0(B^k)\otimes H^0(B^\ell)\overset{q}{\longrightarrow} H^0(B^{k+\ell}).
$$
We see that it suffices to have the surjectivity for $q\circ p$, which is in turn implied by the surjectivity of all the mappings 
$$H^0(B)\otimes H^0(B^{m-1+\ell})\rightarrow H^0(B^{m+\ell})$$ 
for $1\leq m\leq k$.
We now apply the result of Castelnuovo-Mumford (Theorem~\ref{regularity}). Thus, the above maps are surjective if
$$
	H^1(B^{\ell-1})=H^2(B^{\ell-2})=0.
$$
By Proposition~\ref{vanishingprop}, these are satisfied if $\ell\geq 2$.
(Here we remark that all the higher cohomology groups appeared in Castelnuovo-Mumford Theorem vanish because $\dim(S)=2$.)  
Thus, we see that for $k\geq 1$, $H^1(S, M_{B^k}\otimes B^\ell)=0$ if $\ell\geq 2$. Since the roles of $k$ and $\ell$ are symmetric for the mapping in $(\sharp)$, it follows that for $k\geq 2$, we also have $H^1(S, M_{B^k}\otimes B^\ell)=0$ for $\ell=1$. We hence deduce that $H^1(S, M_{B^k}\otimes B^\ell)=0$ if $k+\ell\geq 3$. Note that we can also allow $\ell=0$ here as the map in $(\sharp)$ is surjective for $\ell=0$. We have thus proven $(i)$.

To prove $(ii)$ and $(iii)$, we will need to first establish $H^2(M_{B^k}\otimes B^\ell)=0$ for $\ell\geq 0$ and $H^2(M^{\otimes 2}_{B^k}\otimes B^\ell)=0$ for $\ell\geq 1$. We proceed further along the long exact sequence associated to $(\star)$, 
$$
\begin{matrix}
	\cdots\rightarrow H^1( M^{\otimes(r-1)}_{B^k}\otimes B^{k+\ell}) \\ 
	\rightarrow H^2(M^{\otimes r}_{B^k}\otimes B^\ell)
	\rightarrow H^2(\mathcal H^0(S,B^k) \otimes M^{\otimes(r-1)}_{B^k}\otimes B^\ell)
	\rightarrow\cdots
\end{matrix}
$$ 

Here in the sequence, for $r=1$, we see that the first term $H^1(B^{k+\ell})$ is zero by Proposition~\ref{vanishingprop} and for the last term, we have $H^2(\mathcal H^0(S,B^k)\otimes B^\ell)\cong H^0(B^k)\otimes H^2(B^\ell)=0$. It now follows that $H^2(M_{B^k}\otimes B^\ell)=0$ for $\ell\geq 0$. For $r=2$, the first term $H^1( M_{B^k}\otimes B^{k+\ell})$ vanishes if $2k+\ell\geq 3$ by $(i)$. For the last term, 
$$
H^2(\mathcal H^0(S,B^k) \otimes M_{B^k}\otimes B^\ell)
\cong H^0(B^k)\otimes H^2(M_{B^k}\otimes B^\ell)=0
$$
if $\ell\geq 0$. Therefore, we have $H^2(M^{\otimes 2}_{B^k}\otimes B^\ell)=0$ if $\ell\geq 1$.

Now we prove the $(ii)$ and $(iii)$ for $r\geq 2$ by induction on $r$.

Consider now the case $r=2$. From the long exact sequence associated to $(\star)$ above, we have the exact sequence
$$
\begin{matrix}
	\cdots\rightarrow H^0(\mathcal H^0(S,B^k)\otimes M_{B^k}\otimes B^\ell)\rightarrow H^0( M_{B^k}\otimes B^{k+\ell}) \\ 
	\rightarrow H^1(M^{\otimes 2}_{B^k}\otimes B^\ell)
	\rightarrow H^1(\mathcal H^0(S,B^k) \otimes M_{B^k}\otimes B^\ell)
	\rightarrow\cdots
\end{matrix}
$$

The last term 
$$H^1(\mathcal H^0(S,B^k) \otimes M_{B^k}\otimes B^\ell)\cong
H^0(B^k)\otimes H^1(M_{B^k}\otimes B^\ell)$$ 
vanishes when $k+\ell\geq 3$. Hence, for $k+\ell\geq 3$, the cohomology $H^1(M^{\otimes 2}_{B^k}\otimes B^\ell)$ vanishes if 
$$
H^0(\mathcal H^0(S,B^k)\otimes M_{B^k}\otimes B^\ell)\rightarrow H^0( M_{B^k}\otimes B^{k+\ell}) 
$$
is surjective, which is equivalent to the surjectivity of
$$
H^0(B^k)\otimes H^0(M_{B^k}\otimes B^\ell)\rightarrow H^0( M_{B^k}\otimes B^{k+\ell}) 
$$
since
$$
H^0(\mathcal H^0(S,B^k)\otimes M_{B^k}\otimes B^\ell)
\cong H^0(B^k)\otimes H^0(M_{B^k}\otimes B^\ell).
$$

From the Castelnuovo-Mumford Theorem again, as in the case for $r=1$, the above map is surjective if we have
$$
H^1(M_{B^k}\otimes B^{\ell-1})
=H^2(M_{B^k}\otimes B^{\ell-2})=0.
$$
By what we have proven previously, these are satisfied if $\ell\geq 2$ and $k+\ell\geq 4$.

Suppose now $r\geq 3$. From $(\star)$, we have the exact sequence
$$
\begin{matrix}
	\cdots\rightarrow H^0(\mathcal H^0(S,B^k)\otimes M^{\otimes(r-1)}_{B^k}\otimes B^\ell)\rightarrow H^0( M^{\otimes(r-1)}_{B^k}\otimes B^{k+\ell}) \\ 
	\rightarrow H^1(M^{\otimes r}_{B^k}\otimes B^\ell)
	\rightarrow H^1(\mathcal H^0(S,B^k) \otimes M^{\otimes(r-1)}_{B^k}\otimes B^\ell)
	\rightarrow\cdots
\end{matrix}
$$

By the induction hypothesis, the last term 
$$H^1(\mathcal H^0(S,B^k) \otimes M^{\otimes(r-1)}_{B^k}\otimes B^\ell)\cong
H^0(B^k)\otimes H^1(M^{\otimes(r-1)}_{B^k}\otimes B^\ell)$$ 
vanishes when $k+\ell\geq r$ and $\ell\geq r$. Hence, under the same conditions on $k,\ell,r$, the cohomology $H^1(M^{\otimes r}_{B^k}\otimes B^\ell)$ vanishes if 
$$
H^0(\mathcal H^0(S,B^k)\otimes M^{\otimes(r-1)}_{B^k}\otimes B^\ell)\rightarrow H^0( M^{\otimes(r-1)}_{B^k}\otimes B^{k+\ell}) 
$$
is surjective, which is equivalent to the surjectivity of
$$
H^0(B^k)\otimes H^0(M^{\otimes(r-1)}_{B^k}\otimes B^\ell)\rightarrow H^0( M^{\otimes(r-1)}_{B^k}\otimes B^{k+\ell}) 
$$
since
$$
H^0(\mathcal H^0(S,B^k)\otimes M^{\otimes(r-1)}_{B^k}\otimes B^\ell)
\cong H^0(B^k)\otimes H^0(M^{\otimes(r-1)}_{B^k}\otimes B^\ell).
$$

We argue similarly with the Castelnuovo-Mumford Theorem. Thus, the above map is surjective if we have
$$
H^1(M^{\otimes(r-1)}_{B^k}\otimes B^{\ell-1})
=H^2(M^{\otimes(r-1)}_{B^k}\otimes B^{\ell-2})=0.
$$

The induction hypothesis implies that these conditions are satisfied if $k+(\ell-1)\geq (r-1)+2$ and $\ell-1\geq r-1$ and $\ell-2\geq (r-1)-1$. In particular, it suffices to have $k+\ell\geq r+2$ and $\ell\geq r$.

It remains to consider $H^2(M^{\otimes r}_{B^k}\otimes B^\ell)$. 
From the long exact sequence associated to $(\star)$, we get the exact sequence
$$
\begin{matrix}
	\cdots\rightarrow H^1( M^{\otimes(r-1)}_{B^k}\otimes B^{k+\ell}) \\ 
	\rightarrow H^2(M^{\otimes r}_{B^k}\otimes B^\ell)
	\rightarrow H^2(\mathcal H^0(S,B^k) \otimes M^{\otimes(r-1)}_{B^k}\otimes B^\ell)
	\rightarrow\cdots
\end{matrix}
$$ 

By the induction hypothesis, we have $H^1( M^{\otimes(r-1)}_{B^k}\otimes B^{k+\ell})=0$ for $2k+\ell\geq r+1$ and $k+\ell\geq r-1$. Also by the induction hypothesis, $$H^2(\mathcal H^0(S,B^k) \otimes M^{\otimes(r-1)}_{B^k}\otimes B^\ell)\cong H^0(B^k)\otimes H^2(M^{\otimes(r-1)}_{B^k}\otimes B^\ell)=0$$ 
if $\ell+1\geq r-1$. Combining all these, it now follows that $H^2(M^{\otimes r}_{B^k}\otimes B^\ell)=0$ if $\ell+1\geq r$. The proof is now complete.
\end{proof}

 \noindent{\bf Remark.}
As in the proof of Proposition~\ref{vanishingprop}, by applying the result of Reider on a fake projective plane $S$, it follows easily that for any ample line bundle $A$, its power $A^k$ is base-point-free for $k\geq 6$ and hence the previous theorem will also imply certain vanishing statements for $A$. However, if we just simply do the direct translation in this way, there will be ``gaps'' between the powers of $A$ in which we do not know whether the analogous cohomology groups vanish or not.

\medskip
 In view of the remark above, for the purpose of obtaining a lower bound, above which every power of an arbitrary ample line bundle on $S$ satisfies the property $N_p$, we improve upon the proof of Theorem~\ref{vanishingthm} and obtain the following

\begin{theo}\label{vanishingthm2}
Let $A$ be an ample line bundle and $\sigma$, $\tau$ be torsion line bundles on a fake projective plane $S$. For $r\geq 1$ and $k\geq 12$, we have $H^1(S, M^{\otimes r}_{A^k\otimes\sigma}\otimes A^\ell\otimes\tau)=0$ if $\ell\geq 6(r+2)$, and $H^2(S, M^{\otimes r}_{A^k\otimes\sigma}\otimes A^\ell\otimes\tau)=0$ if $\ell\geq 6r-2$.
\end{theo}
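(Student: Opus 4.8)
The plan is to run the inductive machinery behind Theorem~\ref{vanishingthm} once more, but now with $A^k\otimes\sigma$ itself playing the role of the base-point-free bundle defining the Koszul bundle $M$, and with the extra freedom of the torsion twists $\sigma,\tau$ and of powers $k,\ell$ that need not be multiples of $6$. First I would record that for $k\geq 12$ the bundle $A^k\otimes\sigma$ is base-point-free: writing $A=G^m\otimes\epsilon$ one has $A^k\otimes\sigma=K_S\otimes L$ with $L$ a torsion twist of $G^{mk-3}$, and since every effective divisor on $S$ is a multiple of $G$ with $G\cdot G=1$, Reider's criterion (used exactly as in Proposition~\ref{vanishingprop}) gives base-point-freeness once $mk-3\geq 3$, hence for $k\geq 12$; the same computation shows that every twist $A^s\otimes\delta$ with $s\geq 6$ is base-point-free. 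With this in hand I would tensor the defining sequence of $M_{A^k\otimes\sigma}$ by $M^{\otimes(r-1)}_{A^k\otimes\sigma}\otimes A^\ell\otimes\tau$ precisely as in $(\star)$ and prove the two asserted vanishings by a simultaneous induction on $r$, the base case $r=1$ reducing every cohomology group of the shape $H^i(A^j\otimes(\text{torsion}))$ to Lemma~\ref{vanishinglem}, which supplies vanishing for $j\geq 4$.

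The $H^2$ statement is the soft half. From the long exact sequence, $H^2(M^{\otimes r}_{A^k\otimes\sigma}\otimes A^\ell\otimes\tau)$ sits between $H^1(M^{\otimes(r-1)}_{A^k\otimes\sigma}\otimes A^{k+\ell}\otimes\sigma\otimes\tau)$ and $H^0(A^k\otimes\sigma)\otimes H^2(M^{\otimes(r-1)}_{A^k\otimes\sigma}\otimes A^\ell\otimes\tau)$. The inductive hypotheses make both vanish provided $k+\ell\geq 6(r+1)$ and $\ell\geq 6(r-1)-2$, and because $k\geq 12$ these are implied by the target $\ell\geq 6r-2$; the base case is the corresponding squeeze using Lemma~\ref{vanishinglem}.

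The $H^1$ statement is where the work lies. The long exact sequence reduces it to the vanishing of $H^1(M^{\otimes(r-1)}_{A^k\otimes\sigma}\otimes A^\ell\otimes\tau)$ (an inductive hypothesis) together with the surjectivity of the multiplication map $H^0(A^k\otimes\sigma)\otimes H^0(\mathcal{F})\to H^0(\mathcal{F}\otimes A^k\otimes\sigma)$, where $\mathcal{F}=M^{\otimes(r-1)}_{A^k\otimes\sigma}\otimes A^\ell\otimes\tau$. To prove surjectivity I would factor $A^k\otimes\sigma$ as a tensor product of base-point-free line bundles $A^{s_1}\otimes\delta_1,\dots,A^{s_t}\otimes\delta_t$ with each $s_i\geq 6$, $\sum s_i=k$ and $\prod\delta_i=\sigma$, and split the multiplication into successive multiplications by the global sections of the individual factors. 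Grouping equal factors, each block is a single application of Castelnuovo--Mumford (Theorem~\ref{regularity}): for a base-point-free $E=A^{s}\otimes\delta$ applied to an accumulated sheaf of the form $M^{\otimes(r-1)}_{A^k\otimes\sigma}\otimes A^{p}\otimes(\text{torsion})$, surjectivity follows from the vanishing of $H^1$ of that sheaf tensored with $A^{-s}$ and of $H^2$ of it tensored with $A^{-2s}$, and these are once again governed by the level-$(r-1)$ statements.

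The heart of the matter, and the main obstacle, is the numerical bookkeeping that keeps every one of these intermediate conditions inside the range $\ell\geq 6(r+2)$ with no gap---exactly the difficulty flagged in the remark preceding the theorem. The binding constraints come from the first multiplication step, applied to $\mathcal{F}$ itself (whose $H^1$-condition forces the leading factor to have power $6$ and, for $r\geq 2$, is what pins the threshold at $6(r+2)$), and from the last step, whose $H^2$-condition degrades like $3s_t$ in the power of the final factor. For $r\geq 2$ the inductive thresholds are slack enough that the naive factorization $A^k\otimes\sigma=(A^6)^{\otimes(t-1)}\otimes(A^s\otimes\sigma)$, peeling off copies of $A^6$ and leaving a single remainder $s\equiv k\pmod 6$ with $6\leq s\leq 11$, meets every constraint. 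In the base case $r=1$, where Lemma~\ref{vanishinglem} only gives vanishing for exponent $\geq 4$, this naive choice works for every $k\geq 12$ except $k=17$ (the unique $k\geq 12$ with $k\equiv 5\pmod 6$ and $k<19$), where the final step with $s=11$ would demand $\ell\geq 20$; there I would instead use the balanced split $A^{17}\otimes\sigma=A^{7}\otimes(A^{10}\otimes\sigma)$, which rebalances the first- and last-step costs to land exactly at $\ell\geq 18$. Checking that, with these choices, every Castelnuovo--Mumford hypothesis is satisfied for $\ell$ in the claimed range---and that the intermediate steps, having larger accumulated power, are never the binding ones---is the routine but delicate verification that closes the induction.
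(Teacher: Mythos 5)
Your proposal is correct and follows essentially the same route as the paper: the same induction on $r$, the same long-exact-sequence reductions, and the same device of factoring $A^k\otimes\sigma$ into base-point-free pieces $A^{k_j}$ (with $6\le k_j\le 11$) so that Castelnuovo--Mumford applies at each step, with the $H^2$ statement handled as the easy half. Your special treatment of $k=17$ via the split $A^{7}\otimes(A^{10}\otimes\sigma)$ is exactly the paper's choice ($k_1=7$, $7\le k_2\le 10$ for $14\le k\le 17$), and your numerical thresholds agree with those in the paper's proof.
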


\begin{proof}
The structure of the proof will be similar to that of Theorem~\ref{vanishingthm} and thus we will be brief on certain arguments.
We will again prove by induction on $r$. 

Suppose $r=1$. When $\ell\geq 18$, as $H^1(A^\ell\otimes\tau)=0$, the cohomology $H^1(M_{A^k\otimes\sigma}\otimes A^\ell\otimes\tau)$ vanishes if the map
$$
	H^0(A^k\otimes\sigma)\otimes H^0(A^\ell\otimes\tau)\rightarrow H^0(A^{k+\ell}\otimes\sigma\otimes\tau)
$$
is surjective. 

Since $k\geq 12$,  we can write $A^k\otimes\sigma=A^{k_1}\otimes \cdots\otimes A^{k_p}\otimes\sigma$ for some $p\geq 2$ such that $6\leq k_1\leq k_2\leq\cdots\leq k_p$. When $k=12$ or $13$, such decomposition is unique. If $14\leq k\leq 17$, we choose $k_1=7$ and hence $7\leq k_2\leq 10$. If $k\geq 18$, we can always choose our $k_j$ such that $6=k_1\leq\cdots\leq k_p\leq 9$ for some $p\geq 3$. Hence, for any $k\geq 12$, we can always choose $k_j$ such that $6\leq k_1\leq 7$ and $k_1\leq\cdots\leq k_p\leq 10$.

By considering the natural maps 
$$
H^0(A^{k_1})\otimes\cdots\otimes H^0(A^{k_p}\otimes\sigma)\otimes H^0(A^\ell\otimes\tau)\longrightarrow H^0(A^k\otimes\sigma)\otimes H^0(A^\ell\otimes\tau)\longrightarrow H^0(A^{k+\ell}\otimes\sigma\otimes\tau),
$$
we see that it suffices to have the surjectivity for the naturally defined mappings 
$$
\varphi_j: H^0(A^{k_j})\otimes H^0(A^{k_1+\cdots+k_{j-1}+\ell}\otimes\tau)\rightarrow H^0(A^{k_1+\cdots+k_{j}+\ell}\otimes\tau)
$$ 
for $1\leq j\leq p-1$ and also
$$
\varphi_p: H^0(A^{k_p}\otimes\sigma)\otimes H^0(A^{k_1+\cdots+k_{p-1}+\ell}\otimes\tau)\rightarrow H^0(A^{k+\ell}\otimes\sigma\otimes\tau).
$$

Since $k_j\geq 6$, every $A_{k_j}$ and also $A_{k_p}\otimes\sigma$ are base-point-free as in the proof of Proposition~\ref{vanishingprop}. 
Thus, we can apply the result of Castelnuovo-Mumford and it
says that $\varphi_1$ is surjective if
$$
	H^1(A^{\ell-k_1}\otimes\tau)=H^2(A^{\ell-2k_1}\otimes\tau)=0.
$$
By Lemma~\ref{vanishinglem}, these are true if $\ell\geq 18$ since $6\leq k_1\leq 7$. Similarly, for $2\leq j\leq p-1$, $\varphi_j$ is surjective if
$$
	H^1(A^{k_1+\cdots+k_{j-1}+\ell-k_j}\otimes\tau)=H^2(A^{k_1+\cdots+k_{j-1}+\ell-2k_j}\otimes\tau)=0.
$$
These hold true if $\ell\geq 18$ since $k_1\geq 6$ and $k_j\leq 10$. The surjectivity for $\varphi_p$ can be seen in the same manner.

For $H^2(M_{A^k\otimes\sigma}\otimes A^\ell\otimes\tau)$, it vanishes for $\ell\geq 4$, which can be seen easily from the long exact sequence associated to $(\star)$ for $r=1$, 
$$
\begin{matrix}
	\cdots\rightarrow H^1(A^{k+\ell}\otimes\sigma\otimes\tau)
	\rightarrow H^2(M_{A^k\otimes\sigma}\otimes A^\ell\otimes\tau)\rightarrow H^2(\mathcal H^0(S,A^k\otimes\sigma)\otimes A^\ell\otimes\tau)
	\rightarrow\cdots
\end{matrix}
$$
The case for $r=1$ is now settled.

Suppose now $r\geq 2$. We first consider $H^1(M^{\otimes r}_{A^k}\otimes A^\ell)$. We have the exact sequence
$$
\begin{matrix}
	\cdots\rightarrow H^0(\mathcal H^0(S,A^k\otimes\sigma)\otimes M^{\otimes(r-1)}_{A^k\otimes\sigma}\otimes A^\ell\otimes\tau)\rightarrow H^0( M^{\otimes(r-1)}_{A^k\otimes\sigma}\otimes A^{k+\ell}\otimes\sigma\otimes\tau) \\ 
	\rightarrow H^1(M^{\otimes r}_{A^k\otimes\sigma}\otimes A^\ell\otimes\tau)
	\rightarrow H^1(\mathcal H^0(S,A^k\otimes\sigma) \otimes M^{\otimes(r-1)}_{A^k\otimes\sigma}\otimes A^\ell\otimes\tau)
	\rightarrow\cdots
\end{matrix}
$$
By the induction hypothesis, the last term 
$$
H^1(\mathcal H^0(S,A^k\otimes\sigma) \otimes M^{\otimes(r-1)}_{A^k\otimes\sigma}\otimes A^\ell\otimes\tau)
\cong
H^0(A^k\otimes\sigma)\otimes H^1(M^{\otimes(r-1)}_{A^k\otimes\sigma}\otimes A^\ell\otimes\tau)$$ 
vanishes when $\ell\geq 6(r+1)$. Hence, for $\ell\geq 6(r+1)$, the cohomology group $H^1(M^{\otimes r}_{A^k\otimes\sigma}\otimes A^\ell\otimes\tau)$ vanishes if 
$$
H^0(A^k\otimes\sigma)\otimes H^0(M^{\otimes(r-1)}_{A^k\otimes\sigma}\otimes A^\ell\otimes\tau)\rightarrow H^0( M^{\otimes(r-1)}_{A^k\otimes\sigma}\otimes A^{k+\ell}\otimes\sigma\otimes\tau) 
$$
is surjective. As in the case $r=1$, we can write $A^k\otimes\sigma=A^{k_1}\otimes \cdots\otimes A^{k_p}\otimes\sigma$ for some $p\geq 2$ such that $k_j\geq 6$ for $1\leq j\leq p$. But for $r\geq 2$, we choose our $k_j$ such that $6=k_1\leq k_2\leq\cdots\leq k_p\leq 11$. 

As before,
it suffices to have the surjectivity for
$$
\phi_j: H^0(A^{k_j})\otimes H^0(M^{\otimes(r-1)}_{A^k\otimes\sigma}\otimes A^{k_1+\cdots+k_{j-1}+\ell}\otimes\tau)\rightarrow H^0(M^{\otimes(r-1)}_{A^k\otimes\sigma}\otimes A^{k_1+\cdots+k_{j}+\ell}\otimes\tau)
$$ 
for $1\leq j\leq p-1$ and also
$$
\phi_p: H^0(A^{k_p}\otimes\sigma)\otimes H^0(M^{\otimes(r-1)}_{A^k\otimes\sigma}\otimes A^{k_1+\cdots+k_{p-1}+\ell}\otimes\tau)\rightarrow H^0(M^{\otimes(r-1)}_{A^k\otimes\sigma}\otimes A^{k+\ell}\otimes\sigma\otimes\tau).
$$
By Castelnuovo-Mumford Theorem, $\phi_1$ is surjective if
$$
H^1(M^{\otimes(r-1)}_{A^k\otimes\sigma}\otimes A^{\ell-k_1}\otimes\tau)
=H^2(M^{\otimes(r-1)}_{A^k\otimes\sigma}\otimes A^{\ell-2k_1}\otimes\tau)=0.
$$

The induction hypothesis implies that these conditions are satisfied if $\ell-k_1=\ell-6\geq 6(r+1)$ and $\ell-2k_1=\ell-12\geq 6(r-1)-2$, which are both true if $\ell\geq 6(r+2)$.

For $2\leq j\leq p-1$, $\phi_j$ is surjective if
$$
	H^1(M^{\otimes(r-1)}_{A^k\otimes\sigma}\otimes A^{k_1+\cdots+k_{j-1}+\ell-k_j}\otimes\tau)=H^2(M^{\otimes(r-1)}_{A^k\otimes\sigma}\otimes A^{k_1+\cdots+k_{j-1}+\ell-2k_j}\otimes\tau)=0.
$$
By simple arithmetic, together with the induction hypothesis, it follows that these are satisfied if $\ell-5\geq 6(r+1)$ and $\ell-16\geq 6(r-1)-2$. In particular, it is more than sufficient if we have $\ell\geq 6(r+2)$. The surjectivity for $\phi_p$ is checked similarly.

It remains to check that $H^2(M^{\otimes r}_{A^k\otimes\sigma}\otimes A^\ell\otimes\tau)=0$ when $\ell\geq 6r-2$. 
From the exact sequence
$$
\begin{matrix}
	\cdots\rightarrow H^1( M^{\otimes(r-1)}_{A^k\otimes\sigma}\otimes A^{k+\ell}\otimes\sigma\otimes\tau) \\ 
	\rightarrow H^2(M^{\otimes r}_{A^k\otimes\sigma}\otimes A^\ell\otimes\tau)
	\rightarrow H^2(\mathcal H^0(S,A^k\otimes\sigma) \otimes M^{\otimes(r-1)}_{A^k\otimes\sigma}\otimes A^\ell\otimes\tau)
	\rightarrow\cdots,
\end{matrix}
$$ 
we have, by the induction hypothesis, $H^1( M^{\otimes(r-1)}_{A^k\otimes\sigma}\otimes A^{k+\ell}\otimes\sigma\otimes\tau)=0$ for if $k+\ell\geq 6(r+1)$. Since $k\geq 12$, the inequality is satisfied when $\ell\geq 6r-2$. 
Similarly, 
$$
H^2(\mathcal H^0(S,A^k\otimes\sigma) \otimes M^{\otimes(r-1)}_{A^k\otimes\sigma}\otimes A^\ell\otimes\tau)\cong H^0(A^k\otimes\sigma)\otimes H^2(M^{\otimes(r-1)}_{A^k\otimes\sigma}\otimes A^\ell\otimes\tau)=0
$$ 
if $\ell\geq 6(r-1)-2$. Thus, the second term $H^2(M^{\otimes r}_{A^k}\otimes A^\ell)$ in the exact sequence above is zero when $\ell\geq 6r-2$. The proof is now complete.
\end{proof}

We are now ready to obtain the final result of this section and also give the proof of Theorem~\ref{mainthm2}.

\begin{theo}\label{mainthm}
Let $B$ be an ample line bundle on a fake projective plane $S$ such that $B\otimes K_S^{-2}$ is nef, where $K_S$ is the canonical line bundle. Then, $B^m$ is projectively normal (i.e. satisfies the property $N_0$) if $m\geq 2$. More generally, for $p\geq 1$, $B^m$ satisfies the property $N_p$ if $m\geq p+1$. 
\end{theo}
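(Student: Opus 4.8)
The plan is to invoke the cohomological criterion of Green recalled in Section~2 with the ample line bundle $L=B^m$, and then reduce every required vanishing to the cases already established in Theorem~\ref{vanishingthm}. First I would observe that $B^m$ is ample and, by Proposition~\ref{vanishingprop}, base-point-free for $m\geq 1$, so Green's theorem does apply: it suffices to prove that $H^1(S,\wedge^r M_{B^m}\otimes B^{ms})=0$ for all $0\leq r\leq p+1$ and all $s\geq 1$. Since we work in characteristic zero, $\wedge^r M_{B^m}$ is a direct summand of $M^{\otimes r}_{B^m}$, so it is enough to establish $H^1(S,M^{\otimes r}_{B^m}\otimes B^{ms})=0$ in the same range.

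The heart of the matter is then a direct matching of the exponents against the hypotheses of Theorem~\ref{vanishingthm}, applied with $k=m$ and $\ell=ms$. For $r=0$ the vanishing $H^1(S,B^{ms})=0$ is immediate from Proposition~\ref{vanishingprop}, as $ms\geq 1$. For $r=1$, part $(i)$ of Theorem~\ref{vanishingthm} requires $k+\ell=m(1+s)\geq 3$, which holds since $m\geq 2$ and $s\geq 1$ force $m(1+s)\geq 4$. For $2\leq r\leq p+1$, part $(ii)$ requires $\ell\geq r\geq 2$ and $k+\ell\geq r+2$; here I would note that $ms\geq m\geq p+1\geq r$ gives the first inequality, while $m(1+s)\geq 2m\geq 2(p+1)\geq r+2$ gives the second, the last step using $r\leq p+1$ together with $p\geq 1$ (so that $2(p+1)\geq p+3\geq r+2$). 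With all three cases covered, Green's criterion yields the property $N_p$.

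For the projective normality statement ($p=0$) only $r=0,1$ are needed, and these are handled exactly as above as soon as $m\geq 2$; for $N_p$ with $p\geq 1$ the hypothesis $m\geq p+1$ is precisely what forces the numerical inequalities of part $(ii)$ to hold uniformly over $s\geq 1$ and $r\leq p+1$. I do not expect a deep obstacle, since all the genuine cohomological content has already been carried out in Theorem~\ref{vanishingthm}; the only delicate point is the bookkeeping — correctly identifying $\ell=ms$ and verifying that the tightest constraints, which occur at $s=1$ and $r=p+1$, are still met. Laying out the interlocking inequalities cleanly is what confirms there is no gap in the range $m\geq p+1$, after which the theorem follows at once.
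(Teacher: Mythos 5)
Your proposal is correct and follows essentially the same route as the paper: apply Green's criterion to $L=B^m$, pass from $\wedge^r M_{B^m}$ to $M^{\otimes r}_{B^m}$, and verify the numerical hypotheses of Theorem~\ref{vanishingthm} with $k=m$, $\ell=ms$, using Proposition~\ref{vanishingprop} for the $r=0$ case. The inequality bookkeeping you give (in particular $2(p+1)\geq r+2$ for $2\leq r\leq p+1$ and $ms\geq r$) matches the paper's verification exactly.
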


\begin{proof} As remark earlier, if $B\otimes K_S^{-2}$ is nef, then $B^m$ is base-point-free for $m\geq 1$. Suppose $m\geq 2$. For $B^m$ to satisfy the property $N_0$, by Green's criterion, it suffices to have $H^1(S, B^{ms})=H^1(S, M_{B^m}\otimes B^{ms})=0$ for  $s\geq 1$. These follow respectively from Proposition~\ref{vanishingprop} and Theorem~\ref{vanishingthm}.

Suppose now $p\geq 1$ and $m\geq p+1$. To see that $B^m$ satisfies the property $N_p$, we just need to check $H^1(S, M^{\otimes r}_{B^m}\otimes B^{ms})=0$ for $0\leq r\leq p+1$ and $s\geq 1$. When  $r=0$, it again follows from Proposition~\ref{vanishingprop}. For $r=1$, it follows from Theorem~\ref{vanishingthm}~$(i)$ since $m+ms\geq 2m\geq 2(p+1)\geq 4$. Finally, for $2\leq r\leq p+1$, we have 
$$
m+ms\geq 2m\geq 2(p+1)\geq 2r\geq r+2,
$$
and $ms\geq p+1\geq r\geq 2$ and hence the result follows from Theorem~\ref{vanishingthm}~$(ii)$.
\end{proof}

\begin{coro}
In addition to the results for pluricanonical bundles given by Corollary~\ref{maincoro}, we also have
$K^{2p+2}_S$  satisfies the property $N_p$ for $p\geq 1$. In particular, $K^{2p+4}_S$ satisfies the property $N_p$ for  $p\geq 0$.
\end{coro}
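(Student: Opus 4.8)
The plan is to derive both claims as immediate consequences of Theorem~\ref{mainthm}, applied to the base bundle $B=K_S^2$. The first thing I would check is that $B=K_S^2$ satisfies the hypothesis of that theorem. Writing $K_S=G^3\otimes\tau$ with $G$ an ample generator of $\Pic(S)$ and $\tau$ torsion, we have $K_S^2=G^6\otimes\tau^2$, which is ample, and $B\otimes K_S^{-2}=\mathcal O_S$ is nef. Hence Theorem~\ref{mainthm} is available for the choice $B=K_S^2$.

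For the first assertion, I would write $K_S^{2p+2}=(K_S^2)^{p+1}=B^{p+1}$ and take $m=p+1$ in Theorem~\ref{mainthm}. For $p\geq 1$ this gives $m=p+1\geq 2$, and the constraint $m\geq p+1$ of the theorem holds with equality; the theorem therefore yields directly that $B^{p+1}=K_S^{2p+2}$ satisfies the property $N_p$ for every $p\geq 1$.

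For the second assertion I would simply reindex: applying the first assertion with $p$ replaced by $p+1$ shows that $K_S^{2(p+1)+2}=K_S^{2p+4}$ satisfies $N_{p+1}$ whenever $p+1\geq 1$, that is, for all $p\geq 0$. I would then invoke the standard monotonicity of the syzygy conditions, namely the implication $N_{p+1}\Rightarrow N_p$ (a minimal free resolution that is linear through the $(p+1)$-st stage is \emph{a fortiori} linear through the $p$-th stage), to conclude that $K_S^{2p+4}$ satisfies $N_p$ for all $p\geq 0$; this is exactly what the ``in particular'' clause asserts.

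There is essentially no genuine obstacle here, since all the cohomological difficulty has already been absorbed into Theorem~\ref{mainthm} and the vanishing results that feed it. The only steps requiring any attention are numerical and formal: confirming that $K_S^2$ meets the condition that $B\otimes K_S^{-2}$ be nef, checking that the exponent $m=p+1$ lands exactly on the boundary of the admissible range in Theorem~\ref{mainthm}, and recording the implication $N_{p+1}\Rightarrow N_p$ used in the last step.
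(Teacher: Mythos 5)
Your proof is correct and follows essentially the same route the paper intends: both claims are immediate from Theorem~\ref{mainthm} applied to $B=K_S^2$, which satisfies the nefness hypothesis since $B\otimes K_S^{-2}=\mathcal O_S$. The only cosmetic difference is in the ``in particular'' clause, where the direct reading is to take $m=p+2$ in Theorem~\ref{mainthm} (using its $N_0$ statement for $p=0$ and $m=p+2\geq p+1$ for $p\geq 1$), whereas you reindex the first assertion and invoke the standard implication $N_{p+1}\Rightarrow N_p$; both are valid.
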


\noindent\textbf{Remark.} For an ample and base-point-free line bundle $B$ on a complex projective surface of general type, Gallego-Purnaprajna~[GP] proved that $B^m$ satisfies the property $N_1$ if $m\geq 3$; and for $p\geq 2$, $B^m$ satisfies the property $N_p$ if $m\geq p+1$. The latter result contains in particular our results for $p\geq 2$ in Theorem~\ref{mainthm} since the line bundles satisfying the hypotheses of Theorem~\ref{mainthm} are ample and base-point-free.

\begin{proof}[Proof of Theorem~\ref{mainthm2}] Let $m$ be a positive integer such that $m\geq 6(p+3)$. We fix an ample generator $G$ of Pic$(S)$ and let $A$ and $N$ be an ample and a nef line bundle on $S$ respectively. To show that $A^m\otimes N$ satisfies the property $N_p$, we  will verify for $0\leq r\leq p+1$ and $s\geq 1$, $H^1(S, M^{\otimes r}_{A^m\otimes N}\otimes (A^m\otimes N)^s)=0$ . 

We have $A^m\otimes N=G^k\otimes\sigma$, for some positive integer $k\geq 6(p+3)$ and some torsion line bundle $\sigma$. Thus, $H^1(S, (A^m\otimes N)^s)=H^1(S, G^{ks}\otimes\sigma^s)=0$ for $s\geq 1$ by Lemma~\ref{vanishinglem}. Next, for $1\leq r\leq p+1$ and $s\geq 1$, 
$$
	H^1(S, M^{\otimes r}_{A^m\otimes N}\otimes (A^m\otimes N)^s)
	=H^1(S, M^{\otimes r}_{G^k\otimes\sigma}\otimes G^{ks}\otimes\sigma^s)=0
$$
by Theorem~\ref{vanishingthm2} since $k\geq 18$ and $ks\geq k\geq 6(p+3)\geq 6(r+2)$.
\end{proof}

\section{Syzygies for base-point-free line bundles}

Let $S$ be a fake projective plane. There is a one-to-one correspondence between set of torsion elements of $H_1(S,\bZ)$ and the set of torsion line bundles on $S$, as given by the Universal Coefficient Theorem (cf. [LY]). Moreover, a torsion line bundle $\delta$ corresponds to a surjective representation $\rho_\delta:\pi_1(S)\rightarrow \mathbb Z/d\mathbb Z$, where $d\in\mathbb N^+$ is the order of $\delta$. The representation $\rho_\delta$ necessarily descends to a representation $\rho^\flat_\delta:H_1(S,\mathbb Z)\cong\pi_1(S)/[\pi_1(S),\pi_1(S)]\rightarrow \mathbb Z/d\mathbb Z$. If we let $C_\delta:=\ker(\rho^\flat_\delta)$, then $H_1(S,\mathbb Z)/C_\delta\cong\mathbb Z/d\mathbb Z$.

\begin{lemm}\label{h1lemma}
Let $M$ be a compact complex manifold with $b_1(M)=0$ and $\delta$ be a torsion line bundle on $M$ of order $d$. Let $M_\delta$ be the finite unramified covering of $M$ defined by the kernel of the representation $\rho_\delta:\pi_1(M)\rightarrow \mathbb Z/d\mathbb Z$ associated to $\delta$. Then $h^1(M,\delta)=0$ if $b_1(M_\delta)=0$.
\end{lemm}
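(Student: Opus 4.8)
The plan is to pass to the cover $M_\delta$, on which $\delta$ becomes holomorphically trivial, to realize $H^1(M,\delta)$ as a direct summand of $H^1(M_\delta,\mathcal O_{M_\delta})$, and then to kill the latter group using $b_1(M_\delta)=0$.

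First I would record the structure of the covering. Since $\rho_\delta:\pi_1(M)\to\mathbb Z/d\mathbb Z$ is surjective, $\pi:M_\delta\to M$ is an unramified Galois covering with deck group $\pi_1(M)/\ker\rho_\delta\cong\mathbb Z/d\mathbb Z$. The bundle $\delta$ is the flat line bundle attached to the character $\pi_1(M)\to\mathbb Z/d\mathbb Z\hookrightarrow\mathbb C^{\ast}$, whose monodromy is trivial on $\pi_1(M_\delta)=\ker\rho_\delta$; hence $\pi^{\ast}\delta\cong\mathcal O_{M_\delta}$. Equivalently, $M_\delta$ is the \'etale degree-$d$ cyclic cover determined by a trivialization $\delta^{\otimes d}\cong\mathcal O_M$, and the isotypic decomposition for the $\mathbb Z/d\mathbb Z$-action yields $\pi_{\ast}\mathcal O_{M_\delta}\cong\bigoplus_{j=0}^{d-1}\delta^{j}$, the summands being the powers $\delta^{j}$ with $0\le j\le d-1$ (so that $\delta$ itself occurs). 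Because $\pi$ is finite, $H^1(M_\delta,\mathcal O_{M_\delta})\cong H^1(M,\pi_{\ast}\mathcal O_{M_\delta})\cong\bigoplus_{j=0}^{d-1}H^1(M,\delta^{j})$, and $H^1(M,\delta)$ appears as one of the summands. Thus it suffices to prove $H^1(M_\delta,\mathcal O_{M_\delta})=0$.

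The step I expect to be the crux is the deduction $H^1(M_\delta,\mathcal O_{M_\delta})=h^{0,1}(M_\delta)=0$ from the purely topological hypothesis $b_1(M_\delta)=0$. This implication is not formal for an arbitrary compact complex manifold---the Fr\"olicher spectral sequence only gives the inequality $b_1\le h^{1,0}+h^{0,1}$, the wrong direction---and it genuinely rests on Hodge theory, namely the identity $b_1=h^{1,0}+h^{0,1}$ with both terms nonnegative. This identity holds for every compact complex surface, and in the situation of interest $M_\delta$ is a smooth projective surface (an \'etale cover of the fake projective plane $M$), so $b_1(M_\delta)=0$ forces $h^{0,1}(M_\delta)=0$, which completes the argument.

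Finally I would note the role of the hypothesis $b_1(M)=0$: it makes the torsion line bundle / order-$d$ character dictionary valid and forces the $j=0$ summand $H^1(M,\mathcal O_M)$ to vanish on its own. An alternative packaging of the same argument works entirely on the Betti side, via $H^1(M_\delta,\mathbb C)\cong\bigoplus_{j}H^1(M,\mathbb C_{\chi^{j}})$ for the associated rank-one local systems $\mathbb C_{\chi^j}$, combined with the twisted Hodge decomposition $H^1(M,\mathbb C_{\chi})\cong H^1(M,\delta)\oplus H^0(M,\Omega^1\otimes\delta)$; this route requires exactly the same K\"ahler (equivalently, surface) Hodge input, which is why I would present the coherent-cohomology version as the cleaner one.
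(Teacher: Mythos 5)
Your proposal is correct and follows essentially the same route as the paper: pull back to the cyclic cover on which $\delta$ trivializes, use $\pi_*\mathcal O_{M_\delta}=\bigoplus_{j=0}^{d-1}\delta^j$ to exhibit $H^1(M,\delta)$ as a direct summand of $H^1(M_\delta,\mathcal O_{M_\delta})$, and kill the latter using $b_1(M_\delta)=0$. The paper compresses the final step into the identity $\frac12 b_1(M_\delta)=h^1(M_\delta,\mathcal O_{M_\delta})$, which, as you rightly point out, is the one non-formal ingredient and rests on Hodge theory for the (here projective) cover $M_\delta$.
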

\begin{proof}  
Let $\pi:M_\delta\rightarrow M$ be the finite unramified covering map. Then $\pi^*\delta$ is the trivial line bundle on $M_\delta$ and we have (see~[BHPV, p.55]) 
$$\pi_*(\cO_{M_\delta})=\bigoplus^{d-1}_{j=0}\cO_M(\delta^j).$$
Hence,
$$\frac12 b_1(M_\delta)=h^1(M_\delta,\cO_{M_\delta})=h^1(M,\pi_*(\cO_{M_\delta}))=\sum^{d-1}_{j=0}h^1(M,\delta^j).$$
The lemma now follows.
\end{proof}

Let $S$ be a fake projective plane and $\Pi\subset PU(2,1)$ be the lattice associated to $S$, which is isomorphic to the fundamental group $\pi_1(S)$. By definition, $b_1(S)=0$ as $S$ is a fake projective plane and it follows that $H_1(S,\bZ)\cong \Pi/[\Pi,\Pi]$ is a finite abelian group. Let $\rho:\Pi\rightarrow\Pi/[\Pi,\Pi]$ be the canonical projection. Let $C$ be a subgroup of $\Pi/[\Pi,\Pi]$ and $p_C:\Pi/[\Pi,\Pi]\rightarrow (\Pi/[\Pi,\Pi])/C$ be the natural projection and $\rho_C=p_C\circ\rho$.  Let $S_C=B_{\bC}^2/\ker(\rho_C)$ be the finite unramified covering of $S$ associated to $\ker(\rho_C)$, where $B_{\bC}^2\subset\mathbb C^2$ is the unit ball.



\begin{theo}\label{cartwright}
For every subgroup $C$ of $\Pi/[\Pi,\Pi]$, we have $b_1(S_C)=0$.
\end{theo}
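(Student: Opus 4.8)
The plan is to reduce the statement for an arbitrary $C$ to a single extremal cover, and then to invoke the explicit description of the lattices $\Pi$ coming from the classification of fake projective planes.

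First I would note that all the covers $S_C$ are dominated by one common cover. Since $\rho_C=p_C\circ\rho$ annihilates every commutator, we have $[\Pi,\Pi]=\ker(\rho)\subseteq\ker(\rho_C)$ for every subgroup $C\subseteq\Pi/[\Pi,\Pi]$. Writing $\hS:=B_{\bC}^2/[\Pi,\Pi]$ for the cover attached to the trivial subgroup, there are thus finite unramified coverings $\hS\to S_C\to S$. For a connected finite \'etale covering $\pi:\hS\to S_C$ of degree $n$, the transfer homomorphism $\mathrm{tr}:H^1(\hS,\bQ)\to H^1(S_C,\bQ)$ satisfies $\mathrm{tr}\circ\pi^*=n\cdot\mathrm{id}$; since $n\neq0$ this forces $\pi^*$ to be injective, whence $b_1(S_C)\leq b_1(\hS)$. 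It therefore suffices to prove $b_1(\hS)=0$ for the single cover defined by the commutator subgroup --- precisely the cover alluded to in the introduction.

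Next I would convert this into a group-theoretic assertion. As $B_{\bC}^2$ is contractible and $[\Pi,\Pi]$ acts on it freely, properly discontinuously and cocompactly (it is of finite index in $\Pi$ because $H_1(S,\bZ)=\Pi/[\Pi,\Pi]$ is finite), the quotient $\hS$ is aspherical with $\pi_1(\hS)=[\Pi,\Pi]$. Hence $H_1(\hS,\bZ)\cong[\Pi,\Pi]/\big[[\Pi,\Pi],[\Pi,\Pi]\big]$, and $b_1(\hS)=0$ is equivalent to the finiteness of the abelianization of the commutator subgroup $[\Pi,\Pi]$.

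The remaining step is to establish this finiteness, and it is the genuine obstacle: no soft argument is available. Because $PU(2,1)$ does not possess Kazhdan's property (T), cocompact lattices in it may admit finite-index subgroups with infinite abelianization, so the vanishing of $b_1(\hS)$ cannot be deduced from rigidity or $L^2$-cohomology considerations alone. Instead I would appeal to the complete classification of fake projective planes and the explicit finite presentations of their fundamental groups $\Pi$ obtained by Prasad--Yeung [PY] and Cartwright--Steger [CS]. From such a presentation the commutator subgroup and its abelianization can be computed directly --- a finite computation to be carried out for each of the finitely many classes --- and in every case $[\Pi,\Pi]^{\mathrm{ab}}$ turns out to be finite. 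Together with the reduction of the first two paragraphs, this yields $b_1(S_C)=0$ for all subgroups $C$.
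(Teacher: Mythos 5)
Your proposal is correct and follows essentially the same route as the paper: reduce to the cover $B_{\bC}^2/[\Pi,\Pi]$ dominating all the $S_C$, translate $b_1=0$ into finiteness of the abelianization of $\Pi'=[\Pi,\Pi]$, and settle that finiteness by explicit computation from the Cartwright--Steger presentations of the $50$ lattices (the paper cites a Magma verification by Cartwright using \texttt{AbelianQuotientInvariants}). The only cosmetic difference is that you use the transfer map to get $b_1(S_C)\leq b_1(\hS)$, whereas the paper pulls back harmonic one-forms; both give injectivity of $\pi^*$ on $H^1$ and are interchangeable here.
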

\begin{proof}
Let $S_0$ be the unramified covering of $S$ corresponding to the lattice $[\Pi,\Pi]$. As $\ker(\rho)=[\Pi,\Pi]$ 
and $\ker(\rho)\subset\ker(\rho_C)$, it is clear that $S_0$ is a finite unramified covering of $S_C$, which is 
also a holomorphic isometric covering with respect to the canonical metrics induced from $B_{\bC}^2$. In 
particular, any harmonic one form on $B_{\bC}^2/\ker(\rho_C)$ can be pulled back to $B_{\bC}^2/\ker(\rho)$ 
and hence $b_1(S_0)\geqslant b_1(S_C)$ from Poincar\'e Duality.
Hence it suffices to check that $b_1(S_0)=0$. Equivalently, we must check that
the abelianization $\Pi'/[\Pi',\Pi']$ of $\Pi'=[\Pi,\Pi]$ is finite. This was 
kindly verified for us by Donald Cartwright using Magma, as follows. There are (see~[CS]) exactly~50
groups~$\Pi$'s which are fundamental groups of fake projective planes~$S$. Their names are
listed in the file \verb'registerofgps.txt' in the weblink of [CS]). Each is a subgroup
of a maximal arithmetic subgroup $\bar\Gamma$ of~$PU(2,1)$. Explicit generators and relations
for these $\bar\Gamma$'s are given in various files of the weblink. For example, see \verb'C20p2/gpc20p2generators_reducesyntax.txt'
for the generators and relations for the three $\bar\Gamma$'s named $(C_{20},p=2,\emptyset)$,
$(C_{20},p=2,\{3+\})$ and $(C_{20},p=2,\{3-\})$. Abstract presentations of these groups $\bar\Gamma$ are given
in the file {\tt barGammapresentations.txt} of that weblink, where for each of the fake projective planes~$S$ 
named in {\tt registerofgps.txt}, generators $s_1,s_2,\ldots$ are given for its fundamental group~$\Pi$. 

Using the data about generators and relations for the lattices involved from the file {\tt finite-cover-b1.txt}, for each of these 50 $\Pi$'s, we exhibit the subgroup $\Pi'=[\Pi,\Pi]$ of~$\Pi$, 
then verify that $\Pi'/[\Pi',\Pi']$ is finite using Magma's {\tt AbelianQuotientInvariants\/} routine. 

The subgroup~$\Pi'$ was in each case found as follows. Using Magma's {\tt Rewrite\/}
routine, we first find a presentation of~$\Pi$ from that of $\bar\Gamma$. Let
$n=|\Pi/[\Pi,\Pi]|=|H_1(S,\bZ)|$. Using Magma's {\tt LowIndexNormalSubgroups\/}$(\Pi,n)$ 
routine, we list the normal subgroups of~$\Pi$ of index at most~$n$. Then we find the
unique subgroup in this (sometimes lengthy) list which has index exactly~$n$, and 
contains all commutators $s_i^{-1}s_j^{-1}s_is_j$.
\end{proof}

As discussed at the beginning of this section, for each torsion line bundle $\delta$ of order $d$ on a fake projective plane $S=B^2_{\bC}/\Pi$, there is a subgroup $C_\delta<H_1(S,\mathbb Z)$ such that $H_1(S,\mathbb Z)/C_\delta\cong\mathbb Z/d\mathbb Z$ and the representation of $\pi_1(S)$ associated to $\delta$ is just the composition $\pi_1(S)\rightarrow H_1(S,\mathbb Z)\rightarrow H_1(S,\mathbb Z)/C_\delta$. With the identifications $\pi_1(S)\cong\Pi$ and $H_1(S,\mathbb Z)\cong\pi_1(S)/[\pi_1(S),\pi_1(S)]\cong\Pi/[\Pi,\Pi]$, the finite unramified cover $M_\delta$ associated to $\delta$ described in Lemma~\ref{h1lemma} is just $S_C$ in Theorem~\ref{cartwright} when $C=C_\delta$. Thus, by combining Lemma~\ref{h1lemma} and Theorem~\ref{cartwright}, we have

\begin{coro}\label{h1corollary}
Let $S$ be a fake projective plane and $\delta$ be a torsion line bundle on $S$. Then $h^1(S,\delta)=0$.
\end{coro}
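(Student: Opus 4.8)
The plan is to obtain the vanishing by feeding Theorem~\ref{cartwright} into Lemma~\ref{h1lemma}, so that the work is almost entirely a matter of matching up the two covering constructions. First I would dispose of the trivial case: if $\delta$ is trivial, then $h^1(S,\delta)=h^1(S,\mathcal{O}_S)=\tfrac12 b_1(S)=0$, since $S$, being a fake projective plane, satisfies $b_1(S)=0$. Hence I may assume $\delta$ has order $d>1$.

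Next I would set up the correspondence between $\delta$ and a subgroup of the abelianization, exactly as recalled at the start of the section. By the Universal Coefficient Theorem, $\delta$ corresponds to a surjection $\rho_\delta:\pi_1(S)\to\mathbb{Z}/d\mathbb{Z}$, which factors through the abelianization as $\rho^\flat_\delta:H_1(S,\mathbb{Z})\to\mathbb{Z}/d\mathbb{Z}$; setting $C_\delta=\ker(\rho^\flat_\delta)$ gives $H_1(S,\mathbb{Z})/C_\delta\cong\mathbb{Z}/d\mathbb{Z}$. Under the identifications $\pi_1(S)\cong\Pi$ and $H_1(S,\mathbb{Z})\cong\Pi/[\Pi,\Pi]$, I would then verify that the unramified cover $M_\delta$ of Lemma~\ref{h1lemma}, defined by $\ker(\rho_\delta)$, coincides with the cover $S_C$ of Theorem~\ref{cartwright} for the choice $C=C_\delta$, defined by $\ker(\rho_{C_\delta})$, since $\ker(\rho_\delta)=\ker(\rho_{C_\delta})$ as subgroups of $\Pi$.

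With this identification in hand, Theorem~\ref{cartwright} applied to $C=C_\delta$ gives $b_1(M_\delta)=b_1(S_{C_\delta})=0$. As $b_1(S)=0$ as well, the hypotheses of Lemma~\ref{h1lemma} (with $M=S$) are met, and the lemma yields $h^1(S,\delta)=0$, completing the argument.

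The point to stress is that none of the analytic or arithmetic difficulty resides in this corollary itself: everything hard has already been absorbed into Theorem~\ref{cartwright}, whose content is the finiteness of the abelianization $\Pi'/[\Pi',\Pi']$ of the commutator subgroup $\Pi'=[\Pi,\Pi]$, established by the Magma computation over all $50$ fundamental groups. Consequently, the only step demanding any care here is the bookkeeping in the second paragraph, namely confirming that the two representations $\rho_\delta$ and $\rho_{C_\delta}$ genuinely share the same kernel, so that the two a priori distinct covers are literally the same space and the conclusion $b_1=0$ transfers directly. I do not expect any real obstacle beyond this identification.
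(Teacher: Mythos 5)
Your argument is correct and follows the paper's own proof essentially verbatim: the paper likewise identifies the cover $M_\delta$ of Lemma~\ref{h1lemma} with $S_{C_\delta}$ from Theorem~\ref{cartwright} and combines the two statements, with all the substance residing in the Magma verification behind Theorem~\ref{cartwright}. Your separate treatment of the trivial $\delta$ is a harmless (and reasonable) extra precaution not spelled out in the paper.
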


The following lemma is a consequence of a well known theorem of Remmert-Van de Ven~[RV, p.155]. One may also see~[K, 15.6.2].

\begin{lemm}\label{dimensionlemma}
Let $M$ be a complex projective manifold and $L$ be a holomorphic line bundle on $M$ such that $h^0(M,L)\geq 1$. Then $h^0(M, L^2)\geq 2h^0(M, L)-1$.
\end{lemm}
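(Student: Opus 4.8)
The plan is to reduce the statement to a purely multiplicative inequality inside the field $\mathbb C(M)$ of meromorphic functions on the (connected) manifold $M$, and then to invoke the classical theorem of Remmert--Van de Ven. First I would fix a nonzero section $s_0\in H^0(M,L)$, which exists because $h^0(M,L)\geq 1$, and consider the two linear maps $H^0(M,L)\to\mathbb C(M)$, $s\mapsto s/s_0$, and $H^0(M,L^2)\to\mathbb C(M)$, $t\mapsto t/s_0^2$. Both are injective, since a holomorphic section whose quotient by $s_0$ vanishes on a dense open set must vanish identically. Writing $V\subset\mathbb C(M)$ for the image of the first map, we have $1\in V$ and $\dim V=h^0(M,L)=:n$. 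Because $(ss')/s_0^2=(s/s_0)(s'/s_0)$ for $s,s'\in H^0(M,L)$, the image of the second map contains the linear span $V\cdot V$ of all products of pairs of elements of $V$. As that second map is injective, this already yields $h^0(M,L^2)\geq\dim(V\cdot V)$, so the lemma is reduced to proving $\dim(V\cdot V)\geq 2n-1$.

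The inequality $\dim_{\mathbb C}(V\cdot V)\geq 2\dim_{\mathbb C}V-1$ for a finite-dimensional $\mathbb C$-subspace $V$ of a field is exactly the content I would extract from the Remmert--Van de Ven theorem; it is a multiplicative analogue of the Cauchy--Davenport inequality. A self-contained route is through its two-variable form: for nonzero finite-dimensional $\mathbb C$-subspaces $V,W$ of a field extension $K\supseteq\mathbb C$, one has $\dim(V\cdot W)\geq\dim V+\dim W-1$, which one then applies with $W=V$. I would prove this by induction on $\dim W$. The case $\dim W=1$ is immediate because $K$ has no zero divisors, so multiplication by a fixed nonzero element is injective. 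For the inductive step I would pass to a hyperplane $W'\subset W$ and compare $V\cdot W'$ with $V\cdot W$: the essential point is that the multiplicative stabilizer $\{x\in K:x\cdot(V\cdot W')\subseteq V\cdot W'\}$ is a finite-dimensional $\mathbb C$-subalgebra of the field $K$, hence a field algebraic over $\mathbb C$, hence equal to $\mathbb C$ since $\mathbb C$ is algebraically closed. This triviality of stabilizers forces $V\cdot W'\subsetneq V\cdot W$, so $\dim(V\cdot W)\geq\dim(V\cdot W')+1\geq\dim V+\dim W-1$ by the inductive hypothesis.

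The main obstacle is precisely this multiplicative inequality. The bound $2n-1$ is sharp and fails for general (non-domain) rings, so the argument must genuinely exploit that $\mathbb C(M)$ has no zero divisors; concretely, the delicate point is ruling out the collapse $V\cdot W'=V\cdot W$ when passing to a hyperplane, and this is exactly where the algebraic closedness of $\mathbb C$ (triviality of the multiplicative stabilizers) is indispensable. An alternative, more geometric way to reach the same inequality, which I would keep in reserve, is to restrict to a general complete-intersection curve $C\subset M$ chosen so that the restriction $V\to V|_C$ remains injective; this is possible because the image of the rational map defined by $V$ is non-degenerate in $\mathbb P(V^*)$ and stays so under general hyperplane sections. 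Since restriction gives a surjection $V\cdot V\to (V|_C)\cdot(V|_C)$, we obtain $\dim(V\cdot V)\geq\dim\big((V|_C)\cdot(V|_C)\big)\geq 2n-1$, reducing everything to the case of a smooth projective curve, where the inequality follows from the elementary theory of pole orders of rational functions.
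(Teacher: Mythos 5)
The paper does not actually prove this lemma; it quotes it from Remmert--Van de Ven [RV, p.155] (see also [K, 15.6.2]), and your reduction of the statement to the inequality $\dim_{\mathbb C}(V\cdot V)\geq 2\dim_{\mathbb C}V-1$ for a finite-dimensional subspace $V\ni 1$ of the function field $\mathbb C(M)$ is precisely the content of what is being cited, so the overall strategy is the right one. The reduction itself (dividing by a fixed nonzero section $s_0$, injectivity of $s\mapsto s/s_0$ and $t\mapsto t/s_0^2$ on the connected manifold $M$) is correct.

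The gap is in your self-contained proof of the multiplicative Cauchy--Davenport inequality. In the inductive step you assert that the triviality of the stabilizer $H=\{x\in K: x\,(V\cdot W')\subseteq V\cdot W'\}$ forces $V\cdot W'\subsetneq V\cdot W$. No such implication holds: if $V\cdot W'=V\cdot W$, all you learn is $Vw_0\subseteq V\cdot W'$ for $w_0\in W\setminus W'$, which does not place $w_0$ (or any element outside $\mathbb C$) in $H$. Concretely, in $K=\mathbb C(t)$ take $V=\langle 1,t\rangle$, $W=\langle 1,t,t^2\rangle$ and the hyperplane $W'=\langle 1,t^2\rangle$: then $V\cdot W'=V\cdot W=\langle 1,t,t^2,t^3\rangle$ even though the stabilizer of $V\cdot W'$ is exactly $\mathbb C$. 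So the descent fails for an arbitrary hyperplane, and you give no rule for selecting a good one; the known proofs of $\dim(VW)\geq\dim V+\dim W-1$ over $\mathbb C$ (the linear Kneser/Cauchy--Davenport theorem of Hou--Leung--Xiang) instead proceed by the Dyson $e$-transform, and there it is the triviality of the stabilizer of $V\cdot W$ itself that produces the $-1$. Your reserve argument, by contrast, is the standard correct route and is what you should promote to the main proof: after cutting down to an irreducible curve $C$ on which $V\to V|_C$ remains injective (a general complete intersection of sufficiently positive divisors, or a general curve section of the non-degenerate image of the map defined by $V$), the vanishing orders of the elements of $V|_C$ at any single point of $C$ take exactly $n=\dim V$ distinct values $a_1<\cdots<a_n$ (the filtration by order of vanishing drops by at most one at each step), and the $2n-1$ distinct values $a_1+a_1<a_2+a_1<\cdots<a_n+a_1<a_n+a_2<\cdots<a_n+a_n$ realized by products of sections immediately give $\dim(V\cdot V)\geq 2n-1$.
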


\begin{prop}\label{dimensionprop}
Let $S$ be a fake projective plane and $G$ be an ample generator of $\textrm{Pic}(S)$. For any torsion line bundle $\delta$ on $S$, $h^0(S, G\otimes\delta)\leq 1$, $h^0(S, G^2\otimes\delta)\leq 2$ and $h^0(S, G^3\otimes\delta)\leq 1$.
\end{prop}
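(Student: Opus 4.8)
The plan is to read off all three bounds from Riemann--Roch on $S$ together with Serre duality, the torsion vanishing of Corollary~\ref{h1corollary}, the vanishing of Lemma~\ref{vanishinglem}, and---decisively---the Remmert--Van de Ven inequality of Lemma~\ref{dimensionlemma}. Throughout I would use that $G\cdot G=1$, that $K_S$ is numerically $3G$ (torsion twists do not affect intersection numbers), and that $\chi(\mathcal O_S)=1$. I would treat the three cases in the order $G^3$, then $G^2$, then $G$, since the last two bootstrap off the squaring operation and the first is independent.

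First I would dispose of the cubic case by a direct cohomology count. Writing $G^3\otimes\delta=K_S\otimes\eta$ with $\eta=\tau^{-1}\otimes\delta$ torsion, Riemann--Roch gives $\chi(G^3\otimes\delta)=1+\tfrac12(3G)\cdot(3G-3G)=1$. Serre duality identifies $h^1(K_S\otimes\eta)$ with $h^1(\eta^{-1})$, which vanishes by Corollary~\ref{h1corollary} because $\eta^{-1}$ is torsion, while $h^2(K_S\otimes\eta)=h^0(\eta^{-1})\ge 0$. Hence $h^0(G^3\otimes\delta)=1-h^0(\eta^{-1})\le 1$.

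For the quadratic bound I would first compute $h^0(G^4\otimes\delta^2)$ exactly. Since $G^4\otimes\delta^2$ has the shape $G^4\otimes(\text{torsion})$, Lemma~\ref{vanishinglem} with $A=G$ and $k=4$ kills both $H^1$ and $H^2$, so $h^0(G^4\otimes\delta^2)=\chi(G^4\otimes\delta^2)=1+\tfrac12(4G)\cdot(4G-3G)=3$. I would then apply Lemma~\ref{dimensionlemma} to $L=G^2\otimes\delta$: if $h^0(G^2\otimes\delta)\ge 1$ then $3=h^0(L^2)=h^0(G^4\otimes\delta^2)\ge 2h^0(G^2\otimes\delta)-1$, forcing $h^0(G^2\otimes\delta)\le 2$ (the case $h^0=0$ being trivial). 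The linear bound then follows by bootstrapping: applying Lemma~\ref{dimensionlemma} to $L=G\otimes\delta$ gives, whenever $h^0(G\otimes\delta)\ge 1$, that $h^0(G^2\otimes\delta^2)=h^0(L^2)\ge 2h^0(G\otimes\delta)-1$; but the quadratic case already proved, applied to the torsion bundle $\delta^2$, yields $h^0(G^2\otimes\delta^2)\le 2$, whence $2\ge 2h^0(G\otimes\delta)-1$ and $h^0(G\otimes\delta)\le 1$.

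The point I expect to be most delicate, and worth flagging explicitly, is why Riemann--Roch alone cannot close the $G$ and $G^2$ cases. For $n=1,2$ one has $G^n\otimes\delta=K_S\otimes G^{n-3}\otimes(\text{torsion})$ with $G^{n-3}$ \emph{not} ample, so Kodaira vanishing does not control the relevant $H^1$, and Serre duality merely exchanges the two unknown groups $h^1(G\otimes\cdot)$ and $h^1(G^2\otimes\cdot)$, producing a tautology rather than a bound. The mechanism that breaks this deadlock is the multiplicativity inside Remmert--Van de Ven: the identities $(G\otimes\delta)^2=G^2\otimes\delta^2$ and $(G^2\otimes\delta)^2=G^4\otimes\delta^2$ make squaring land on bundles whose $h^0$ is genuinely computable---$h^0(G^4\otimes\delta^2)=3$ by vanishing, and then $h^0(G^2\otimes\delta^2)\le 2$ from the previous step---and it is precisely this alignment of the squares with controllable cohomology that lets the cascade terminate.
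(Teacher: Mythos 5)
Your proof is correct and follows essentially the same route as the paper: compute $h^0(G^4\otimes\delta^2)=3$ by Riemann--Roch plus Kodaira vanishing, apply Lemma~\ref{dimensionlemma} to $G^2\otimes\delta$ and then bootstrap to $G\otimes\delta$ via $h^0(G^2\otimes\delta^2)\le 2$, and handle $G^3\otimes\delta$ separately with Riemann--Roch, Serre duality and Corollary~\ref{h1corollary}. The only difference is that you spell out the steps the paper leaves implicit.
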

\begin{proof}
Since $K_S=G^3\otimes\tau$ for some torsion line bundle $\tau$, using Riemann-Roch Formula with Kodaira Vanishing Theorem, it is easy to get that $h^0(G^4\otimes\delta^2)=3$ for any torsion line bundle $\delta$. It now follows from Lemma~\ref{dimensionlemma} that $h^0(S, G^2\otimes\delta)\leq 2$, which in turn implies $h^0(S, G\otimes\delta)\leq 1$.

For $h^0(S, G^3\otimes\delta)$, the result follows again from Riemann-Roch Formula, together with Corollary~\ref{h1corollary} and the fact that $K_S=G^3\otimes\tau$.
\end{proof}

\begin{proof}[Proof of Proposition 1]  This is now a consequence of Corollary  3 and Proposition 3.
\end{proof}

We are now ready to prove Theorem~\ref{mainthm3}.

\begin{proof}[Proof of Theorem~\ref{mainthm3}]
Let $G$ be an ample generator of $\textrm{Pic}(S)$. As the Picard number of $S$ is 1, $B$ must be ample, and we can write $B=G^k\otimes\sigma$, for some $k\geq 1$ and some torsion line bundle $\sigma$. As $B$ is base-point-free, it is clear from Proposition~\ref{dimensionprop} that $k\neq 1, 3$. We claim that $k\neq 2$ neither.  Suppose otherwise that $k=2$.  Then by Proposition~\ref{dimensionprop}, $h^0(S,B)\leq 2$.  Since $B$ is base-point-free, we have in fact $h^0(S,B)=2$.  But $S$ is of Picard number 1, the zero divisors of two linearly independent sections of $\Gamma(S,B)$ must have non-trivial intersection, which contradicts base-point-freeness.  Hence, $k\geq 4$.
Finally, as Lemma~\ref{vanishinglem} is true whenever $k\geq 4$, we conclude that Proposition~\ref{vanishingprop}, Theorem~\ref{vanishingthm} and Theorem~\ref{mainthm} are all true for any base-point-free line bundle.
\end{proof}

\section{Refinement with a conjecture on vanishing cohomology}

Motivated by the existence problem for exceptional collections for objects in the derived category of coherent sheaves on a fake projective plane $S$, we formulate the following conjecture, which is a variant of  Conjecture 1.1  in~[GKMS] and  Conjecture 2 in~[LY].

\begin{conj}\label{lyconjecture}
Let $S$ be a fake projective plane and $G$ be an ample generator of Pic$(S)$. Then, $H^0(S, G^2\otimes\sigma)=0$ for any torsion line bundle $\sigma$.
\end{conj}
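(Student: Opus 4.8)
The plan is to reduce the conjecture to a borderline cohomology vanishing statement, attack that statement through the geometry of effective divisors (passing to the covers constructed in Theorem~\ref{cartwright}), and, in the presence of extra symmetry, through a fixed-point argument.

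First I would apply Riemann--Roch. With $L=G^2\otimes\sigma$ one has $\chi(S,L)=\chi(\cO_S)+\tfrac12\big(L\cdot L-L\cdot K_S\big)$; since numerically $L\equiv 2G$ and $K_S\equiv 3G$ with $G\cdot G=1$ and $\chi(\cO_S)=1$, this equals $1+\tfrac12(4-6)=0$. Serre duality gives $h^2(S,G^2\otimes\sigma)=h^0(S,G\otimes\tau\otimes\sigma^{-1})$, which is at most $1$ by Proposition~\ref{dimensionprop}. The key observation is that it suffices to prove $H^1(S,G^2\otimes\sigma)=0$: once $h^1=0$, the relation $h^0-h^1+h^2=0$ forces $h^0+h^2=0$, hence $h^0=h^2=0$. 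Moreover Serre duality also yields $h^1(S,G^2\otimes\sigma)=h^1(S,G\otimes\tau\otimes\sigma^{-1})$, so as $\sigma$ ranges over all torsion bundles this is \emph{equivalent} to $H^1(S,G\otimes\delta)=0$ for every torsion $\delta$. Thus the entire conjecture is pinned to the vanishing of $H^1$ for the borderline twists $G\otimes\delta$ and $G^2\otimes\delta$.

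This is precisely the regime Lemma~\ref{vanishinglem} cannot reach: writing $G^2\otimes\sigma=K_S\otimes G^{-1}\otimes\tau^{-1}\otimes\sigma$, the factor $G^{-1}$ is anti-ample, so Kodaira vanishing does not apply, and I expect this to be the principal obstacle. I would therefore attack the equivalent statement $H^0(S,G^2\otimes\sigma)=0$ directly. A nonzero section produces an effective divisor $D$ with $[D]=G^2\otimes\sigma$, so $D\equiv 2G$ and $D\cdot G=2$. Since $S$ has Picard number $1$ and $G$ is the ample generator, every irreducible component of $D$ is numerically $d_iG$ with $d_i\geq 1$; hence either $D$ is an irreducible curve in the class $2G$ (a curve of genus $6$, since adjunction gives $2g-2=D\cdot D+D\cdot K_S=4+6$), or $D$ contains a curve numerically equivalent to $G$ (a curve of genus $3$, i.e.\ $h^0(S,G\otimes\delta)\geq 1$ for some torsion $\delta$). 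One clean way to treat all twists at once is to pass to the unramified cover $S_\sigma$ on which $\pi^*\sigma$ is trivial: as in the proof of Lemma~\ref{h1lemma}, $\pi_*\cO_{S_\sigma}=\bigoplus_{j}\cO_S(\sigma^j)$, so by the projection formula $H^0\big(S_\sigma,(\pi^*G)^{\otimes 2}\big)\cong\bigoplus_j H^0(S,G^2\otimes\sigma^j)$, and the conjecture for all powers of $\sigma$ reduces to showing that the ample bundle $(\pi^*G)^{\otimes 2}$ on the single surface $S_\sigma$ has no global sections. The aim would then be to exclude the putative low-genus curve using adjunction together with the Bogomolov--Miyaoka--Yau and ball-quotient constraints available on $S_\sigma$ (Theorem~\ref{cartwright} already records $b_1(S_\sigma)=0$).

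When $S$ has more than three automorphisms I would instead exploit the symmetry, which is how the statement is currently known. The group $\mathrm{Aut}(S)$ acts on $\Pic(S)$ and permutes the torsion line bundles; after choosing a linearization of $G^2\otimes\sigma$ one can apply the holomorphic Lefschetz fixed-point formula to a suitable automorphism $g$. Because $h^0(S,G^2\otimes\sigma)\leq 2$ and $h^2(S,G^2\otimes\sigma)=h^0(S,G\otimes\tau\otimes\sigma^{-1})\leq 1$ are small, the resulting trace identity severely constrains the eigenvalues of $g^*$ on these spaces, and together with $\chi=0$ this should force $H^0=0$. The hard part, and the reason the conjecture remains open in general, is exactly the absence of such symmetry for most fake projective planes: without it there is at present no mechanism either to rule out a genus-$3$ or genus-$6$ curve in the classes $G$ or $2G$, or equivalently to establish the borderline vanishing $H^1(S,G\otimes\delta)=0$ that Kodaira misses.
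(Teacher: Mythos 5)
The statement you were asked to prove is labelled as a \emph{conjecture} in the paper (Conjecture~\ref{lyconjecture}); the paper contains no proof of it and only records that it is known for fake projective planes with $|\Aut(S)|>3$ (citing [GKMS] and [LY]). So there is no proof of the authors' to compare yours against, and your proposal --- which you candidly present as a reduction plus a program --- is likewise not a proof.

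That said, your reformulations are all correct and match what the paper itself extracts from the conjecture in Proposition~\ref{strongervanishing}, just run in the opposite direction. Riemann--Roch does give $\chi(S,G^2\otimes\sigma)=1+\tfrac12(4-6)=0$, Serre duality does give $h^2(S,G^2\otimes\sigma)=h^0(S,G\otimes\tau\otimes\sigma^{-1})$, and since a section of $G\otimes\delta$ squares to a section of $G^2\otimes\delta^2$, the conjecture (quantified over all torsion $\sigma$) is indeed equivalent to $H^1(S,G\otimes\delta)=0$ for all torsion $\delta$, i.e.\ to the borderline vanishing that Kodaira and Lemma~\ref{vanishinglem} cannot reach. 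Your adjunction computations (arithmetic genus $3$ for a curve in class $G$, genus $6$ for an irreducible curve in class $2G$) and the projection-formula repackaging on the cover $S_\sigma$ are also correct. The genuine gap is the one you name yourself: nothing in the proposal excludes an effective divisor numerically equivalent to $G$ or $2G$. Ball-quotient rigidity, Bogomolov--Miyaoka--Yau, and $b_1(S_\sigma)=0$ from Theorem~\ref{cartwright} constrain $S_\sigma$ but do not, by any argument you give, forbid such low-degree curves; and the holomorphic Lefschetz strategy requires an automorphism that most of the $50$ fake projective planes do not have. So the proposal should be read as a correct dissection of why the conjecture is hard, not as progress toward proving it; were it submitted as a proof it would be rejected at the step ``exclude the putative low-genus curve,'' for which no mechanism is supplied.
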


The conjecture above implies a stronger version of Lemma~\ref{vanishinglem}, as follows:
\begin{prop}\label{strongervanishing}
If Conjecture~\ref{lyconjecture} holds for a fake projective plane $S$, then for any ample generator $G$ and torsion $\sigma$ of Pic$(S)$, we have $H^1(S, G^k\otimes\sigma)=0$ for $k\geq 1$ and $H^2(S, G^k\otimes\sigma)=0$ for $k=1,2$, or $k\geq 4$.
\end{prop}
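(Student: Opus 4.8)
The plan is to organize the proof around Serre duality and Riemann--Roch, treating the $H^2$ vanishing first (since the $H^1$ statements for small $k$ will feed on it), and to isolate a single consequence of Conjecture~\ref{lyconjecture} that does all the real work. The key preliminary step is to upgrade the hypothesis $h^0(S,G^2\otimes\sigma)=0$ to the statement that $h^0(S,G\otimes\delta)=0$ for \emph{every} torsion line bundle $\delta$. I would do this by contradiction: if $h^0(S,G\otimes\delta)\geq 1$, then Lemma~\ref{dimensionlemma} applied to $L=G\otimes\delta$ forces $h^0(S,G^2\otimes\delta^2)\geq 2h^0(S,G\otimes\delta)-1\geq 1$, contradicting Conjecture~\ref{lyconjecture} with $\sigma=\delta^2$. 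This deduction is the bridge between the hypothesis and the conclusion.

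Next I would dispose of the $H^2$ vanishing. By Serre duality and $K_S=G^3\otimes\tau$, we have $H^2(S,G^k\otimes\sigma)\cong H^0(S,G^{3-k}\otimes\tau\otimes\sigma^{-1})^*$. For $k\geq 4$ the bundle $G^{3-k}\otimes\tau\otimes\sigma^{-1}$ has negative degree against the ample $G$, so this vanishes (this is already Lemma~\ref{vanishinglem}); for $k=1$ the dual space is $H^0(S,G^2\otimes\tau\otimes\sigma^{-1})$, which vanishes by the conjecture; and for $k=2$ it is $H^0(S,G\otimes\tau\otimes\sigma^{-1})$, which vanishes by the preliminary step. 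I would point out that $k=3$ is correctly excluded here, since in that case $H^2(S,G^3\otimes\sigma)\cong H^0(S,\tau\otimes\sigma^{-1})^*$ is nonzero exactly when $\sigma=\tau$, i.e. when $G^3\otimes\sigma=K_S$.

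For the $H^1$ vanishing, the range $k\geq 4$ is again Lemma~\ref{vanishinglem}. For $k=3$, Serre duality gives $H^1(S,G^3\otimes\sigma)\cong H^1(S,\tau\otimes\sigma^{-1})^*$, which vanishes by Corollary~\ref{h1corollary} since $\tau\otimes\sigma^{-1}$ is torsion; notably no use of the conjecture is needed here. The remaining cases $k=1,2$ are the crux and go through Riemann--Roch: using $\chi(\mathcal O_S)=1$, $G\cdot G=1$ and $K_S\equiv 3G$ numerically, one computes $\chi(S,G^k\otimes\sigma)=1+\tfrac12(k^2-3k)=0$ for $k=1,2$. Since $h^0=0$ (by the conjecture directly for $k=2$, by the preliminary step for $k=1$) and $h^2=0$ (from the previous paragraph), the identity $h^0-h^1+h^2=\chi=0$ forces $h^1(S,G^k\otimes\sigma)=0$.

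I expect the main obstacle to be conceptual rather than computational: the cases $k=1,2$ cannot be reached directly by Kodaira vanishing, because $G^k\otimes\sigma$ is not of the form $K_S\otimes(\text{ample})$, so one is forced onto Riemann--Roch, and the Euler characteristic identity only closes up once \emph{both} $h^0$ and $h^2$ are independently known to vanish. The genuine input of Conjecture~\ref{lyconjecture} enters solely through the preliminary step $h^0(S,G\otimes\delta)=0$, so the care in the write-up lies in checking that this single deduction, together with the conjecture itself, suffices to annihilate every relevant $h^0$ term that appears after dualizing.
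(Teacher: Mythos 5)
Your proposal is correct and follows essentially the same route as the paper's proof: upgrade the conjecture to $h^0(S,G\otimes\delta)=0$ for all torsion $\delta$, kill the $H^2$ groups for $k=1,2$ by Serre duality, then get the $H^1$ vanishing from $\chi(S,G^k\otimes\sigma)=0$ via Riemann--Roch, with $k=3$ handled by Corollary~\ref{h1corollary} and $k\geq 4$ by Lemma~\ref{vanishinglem}. The only cosmetic difference is that you invoke Lemma~\ref{dimensionlemma} for the preliminary step where the paper simply notes that a section of $G\otimes\delta$ squares to a section of $G^2\otimes\delta^2$; both are valid.
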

\begin{proof}
If $H^0(S, G^2\otimes\sigma)=0$ for any torsion $\sigma$, then we also have $H^0(S, G\otimes\sigma)=0$ for any torsion $\sigma$. Since $K_S=G^3\otimes\tau$ for some torsion $\tau$, using Serre Duality, we get $$H^2(S, G\otimes\sigma)=H^2(S, G^2\otimes\sigma)=0$$ for any torsion $\sigma$. Now, by Riemann-Roch formula, it follows that $$H^1(S, G\otimes\sigma)=H^1(S, G^2\otimes\sigma)=0$$
for any torsion $\sigma$. 

Finally, $H^1(S, G^3\otimes\sigma)=H^1(S,\tau\otimes\sigma^{-1})=0$ by Serre Duality and Corollary~\ref{h1corollary}. 

(Note that $H^2(S, G^3\otimes\sigma)=H^0(S,\tau\otimes\sigma^{-1})$ is zero if and only if $\sigma\neq\tau$.) 
\end{proof}

With the help of Proposition~\ref{strongervanishing}, we are now able to show that for a fake projective plane $S$ satisfying Conjecture~\ref{lyconjecture}, its pluricanonical line bundle $K^{2p+5}_S$, which is missed in Corollary~\ref{maincoro}, also satisfies the property $N_p$. The essential case is the projective normality (i.e. when $p=0$), which we have singled out in the following theorem. 

\begin{theo}\label{k5thm}
Let $S$ be a fake projective plane satisfying Conjecture~\ref{lyconjecture}. Then $K_S^m$ is projectively normal for $m\geq 4$.
\end{theo}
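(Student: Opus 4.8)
The plan is to reduce to the single genuinely new case $m=5$ and then run Green's criterion, with Conjecture~\ref{lyconjecture} entering only to close one low-degree $H^2$ vanishing in the tightest Castelnuovo--Mumford step.

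First I would dispose of the cases that are already settled unconditionally. For $m\geq 6$, projective normality is Corollary~\ref{maincoro}, and for $m=4$ it follows from Theorem~\ref{mainthm} applied to $B=K_S^2$ (note $B\otimes K_S^{-2}=\mathcal O_S$ is nef, so $B^2=K_S^4$ satisfies $N_0$); neither uses the conjecture. Thus the real content is $m=5$, precisely the power ``missed'' in Corollary~\ref{maincoro}. Since $K_S^5=G^{15}\otimes\tau^5$ has $G$-degree $15\geq 6$, it is base-point-free by Proposition~\ref{vanishingprop}, so $M_{K_S^5}$ is defined and Green's criterion applies: $K_S^5$ is projectively normal once $H^1(S,K_S^{5s})=0$ and $H^1(S,M_{K_S^5}\otimes K_S^{5s})=0$ for all $s\geq 1$.

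The first vanishing is immediate from Lemma~\ref{vanishinglem}. For the second I would tensor the defining sequence $0\to M_{K_S^5}\to\mathcal H^0(S,K_S^5)\to K_S^5\to 0$ by $K_S^{5s}$ and take cohomology, exactly as in $(\star)$ with $r=1$; since $H^1(\mathcal H^0(S,K_S^5)\otimes K_S^{5s})\cong H^0(K_S^5)\otimes H^1(K_S^{5s})=0$, the desired vanishing is equivalent to surjectivity of the multiplication maps
$$\mu_s:\ H^0(K_S^5)\otimes H^0(K_S^{5s})\longrightarrow H^0(K_S^{5(s+1)}),\qquad s\geq 1.$$
To prove each $\mu_s$ surjective I would factor the multiplier as $K_S^5=(G^7\otimes\tau^5)\otimes G^8$, both factors having $G$-degree $\geq 6$ and hence being base-point-free, and split $\mu_s$ into two multiplications: first multiply $G^7\otimes\tau^5$ into $K_S^{5s}$, then multiply $G^8$ into the result. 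Castelnuovo--Mumford (Theorem~\ref{regularity}) reduces the surjectivity of each step to the vanishing of $H^1(\,\cdot\otimes E^{-1})$ and $H^2(\,\cdot\otimes E^{-2})$, which I would check via the strengthened vanishing of Proposition~\ref{strongervanishing}.

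The hard part is the $s=1$ step, where the degrees are smallest, and the crux is the choice of factor degrees. Multiplying $E=G^7\otimes\tau^5$ into $K_S^{5}=G^{15}\otimes\tau^5$ produces the term $H^2(K_S^5\otimes E^{-2})=H^2(G^{1}\otimes\tau^{-5})$, which vanishes under Conjecture~\ref{lyconjecture} by Proposition~\ref{strongervanishing} (the case $k=1$); the companion term $H^1(G^8)$, and the two terms of the second step $H^1(G^{14}\otimes\tau^{10})$ and $H^2(G^6\otimes\tau^{10})$, all vanish already by Lemma~\ref{vanishinglem}. The point is that this degree split is essentially forced: a split into $G^6$ and $G^9$ would instead produce the term $H^2(G^3\otimes\sigma)$, which as noted in the proof of Proposition~\ref{strongervanishing} fails exactly when $\sigma=\tau$, while a first factor of higher $G$-degree pushes the twisted degree negative and makes $H^2$ genuinely large. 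This is precisely why the conjecture is unnecessary for $m=4$ and $m\geq 6$ (there one may split into factors of $G$-degree $6$ and keep every intermediate $H^2$ in degree $\geq 4$, so Lemma~\ref{vanishinglem} suffices) yet is indispensable at $m=5$. For $s\geq 2$ the same factorization keeps all intermediate cohomology in degree $\geq 4$, so only Lemma~\ref{vanishinglem} is needed; the surjectivity of every $\mu_s$ then yields projective normality of $K_S^5$, completing the case $m=5$ and hence the theorem.
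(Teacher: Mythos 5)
Your proposal is correct and follows essentially the same route as the paper: reduce to $m=5$ (with $m=4$ from Theorem~\ref{mainthm} and $m\geq 6$ from Corollary~\ref{maincoro}), apply Green's criterion, factor $G^{15}\otimes(\text{torsion})$ as a product of pieces of $G$-degree $7$ and $8$, and run Castelnuovo--Mumford twice, with Conjecture~\ref{lyconjecture} (via Proposition~\ref{strongervanishing}) needed only for the single term $H^2(G^{1}\otimes(\text{torsion}))$ at $s=1$. Your added explanation of why the $7+8$ split is forced is accurate and a nice supplement, but the argument itself matches the paper's proof of Theorem~\ref{k5thm}.
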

\begin{proof}
By Corollary~\ref{maincoro}, it remains to prove that $K^5_S$ is projectively normal. Since $K_S=G^3\otimes\tau$, for some torsion $\tau$, using Green's criterion (for the property $N_0$), we just need to show that for any  torsion line bundles $\sigma$, $\delta$, $H^1(S, M_{G^{15}\otimes\sigma}\otimes G^\ell\otimes\delta)=0$ for $\ell= 15s$, where $s\in\mathbb N^+$. As in the proof of Theorem~\ref{vanishingthm2}, after decomposing as $G^{15}\otimes\sigma=G^7\otimes G^8\otimes\sigma$, it suffices to verify that the tensor product maps
$$
	H^0(G^7)\otimes H^0(G^\ell\otimes\delta)\rightarrow
	H^0(G^{7+\ell}\otimes\sigma\otimes\delta),
$$
$$
	H^0(G^8\otimes\sigma)\otimes H^0(G^{7+\ell}\otimes\delta)\rightarrow
	H^0(G^{15+\ell}\otimes\sigma\otimes\delta)
$$
are surjective for $\ell=15s$, $s\in\mathbb N^+$. Using Castelnuovo-Mumford Theorem, it is sufficient to have, respectively,
$$
	H^1(G^{\ell-7}\otimes\delta)=H^2(G^{\ell-14}\otimes\delta)=0
$$
and
$$
	H^1(G^{7+\ell-8}\otimes\sigma\otimes\delta)=H^2(G^{7+\ell-16}
	\otimes\sigma\otimes\delta)=0.
$$

By Proposition~\ref{strongervanishing}, these conditions hold for $\ell=15s$, $s\in\mathbb N^+$.
\end{proof}

\noindent
{\bf Remark.} The above argument implies that $K_S^m$ is projective normal for $m\geq 4$ for fake projective planes with cardinality of automorphism group $|\Aut(S)|>3$,  using the facts, cf. [LY] and the references therein, that $K_S=G^3$ for some ample line bundle $G$ with $H^0(S,G^2)=0$ and letting $\delta$ and $\sigma$ be trivial
in the above argument.

After settling projective normality, we can now prove that the general property $N_p$ holds for $K^{2p+5}_S$. We will see that for $p\geq 1$, the argument does not require Conjecture~\ref{lyconjecture}. The major step is the following vanishing theorem tailored for the purpose. 

\begin{prop}\label{lastvanishingprop}
Let $S$ be a fake projective plane. Let $G$ be an ample generator and $\sigma$, $\delta$ be torsions in Pic$(S)$. For $k\geq 18$ and $r\geq 1$, we have $H^1(S, M^{\otimes r}_{G^k\otimes\sigma}\otimes G^\ell\otimes\delta)=0$ if $\ell\geq 6(r+1)$.
\end{prop}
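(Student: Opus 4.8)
The plan is to argue by induction on $r$, following the scheme of the proof of Theorem~\ref{vanishingthm2} but with a more economical decomposition that exploits the stronger hypothesis $k\geq 18$ to lower the threshold from $6(r+2)$ to $6(r+1)$. In every case I would form the long exact sequence attached to $(\star)$ (with $B^k$ replaced by $G^k\otimes\sigma$ and the trailing factor twisted by $\delta$), note that the tail term $H^1(\mathcal H^0(S,G^k\otimes\sigma)\otimes M^{\otimes(r-1)}_{G^k\otimes\sigma}\otimes G^\ell\otimes\delta)\cong H^0(G^k\otimes\sigma)\otimes H^1(M^{\otimes(r-1)}_{G^k\otimes\sigma}\otimes G^\ell\otimes\delta)$ vanishes by the inductive hypothesis once $\ell\geq 6r$, and thereby reduce the vanishing of $H^1(M^{\otimes r}_{G^k\otimes\sigma}\otimes G^\ell\otimes\delta)$ to the surjectivity of the multiplication map $H^0(G^k\otimes\sigma)\otimes H^0(M^{\otimes(r-1)}_{G^k\otimes\sigma}\otimes G^\ell\otimes\delta)\to H^0(M^{\otimes(r-1)}_{G^k\otimes\sigma}\otimes G^{k+\ell}\otimes\sigma\otimes\delta)$. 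Surjectivity is then extracted from Castelnuovo--Mumford (Theorem~\ref{regularity}), applied one tensor factor at a time.

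For the inductive step $r\geq 2$ I would telescope on the line-bundle factor $G^k\otimes\sigma$, writing $G^k\otimes\sigma=G^{k_1}\otimes\cdots\otimes G^{k_p}\otimes\sigma$ with $k_1=6$ and all remaining exponents in $[6,10]$; such a decomposition exists for every $k\geq 18$ because $k-6\geq 12$ and every integer $\geq 12$ is a sum of integers in $[6,10]$. Each $G^{k_j}$ (and $G^{k_p}\otimes\sigma$) is base-point-free as in Proposition~\ref{vanishingprop}, so Theorem~\ref{regularity} applies with $E=G^{k_j}$; writing $S_{j-1}=k_1+\cdots+k_{j-1}$, its two hypotheses are $H^1(M^{\otimes(r-1)}_{G^k\otimes\sigma}\otimes G^{S_{j-1}+\ell-k_j}\otimes\delta)=0$ and $H^2(M^{\otimes(r-1)}_{G^k\otimes\sigma}\otimes G^{S_{j-1}+\ell-2k_j}\otimes\delta)=0$, which follow respectively from the inductive hypothesis (threshold $6r$) and from the $H^2$ part of Theorem~\ref{vanishingthm2} (threshold $6(r-1)-2=6r-8$). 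With $\ell\geq 6(r+1)$ the tightest constraints sit at small $j$: taking $k_1=6$ gives $\ell-k_1\geq 6r$ and $\ell-2k_1\geq 6r-6\geq 6r-8$, while at $j=2$ the condition $S_1+\ell-2k_2\geq 6r-8$ is exactly what caps the pieces at $k_2\leq 10$; for $j\geq 3$ the partial sum $S_{j-1}\geq 12$ makes every inequality slack.

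The genuine difficulty, and the reason the bound $6(r+1)$ is delicate, is the base case $r=1$, where the target is $\ell\geq 12$. Here the moving sheaf $M^{\otimes 0}_{G^k\otimes\sigma}$ is just $\mathcal O_S$, so the relevant map is the \emph{symmetric} multiplication $H^0(G^k\otimes\sigma)\otimes H^0(G^\ell\otimes\delta)\to H^0(G^{k+\ell}\otimes\sigma\otimes\delta)$. If one telescopes on the $G^k\otimes\sigma$ factor, exactly as in Theorem~\ref{vanishingthm2}, the first Castelnuovo--Mumford step demands $H^2(G^{\ell-2k_1}\otimes\delta)=0$, i.e. $\ell\geq 2k_1+4\geq 16$ by Lemma~\ref{vanishinglem}, which is too weak. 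The key point is that one may instead telescope on the \emph{other} factor $G^\ell\otimes\delta$: decomposing it into pieces of exponent in $[6,10]$ (possible for $\ell\geq 12$, the torsion $\delta$ being carried by the last piece), the first step now only requires $H^2(G^{k-12}\otimes\sigma)=0$, that is $k-12\geq 4$, which holds precisely because $k\geq 18$. The remaining steps are controlled as before, since the accumulated exponent in the large factor begins at $k\geq 18$. I expect this base case --- specifically the realization that one should telescope on the factor carrying the larger exponent --- to be the main obstacle, the inductive step amounting to a careful but routine tracking of numerical thresholds.
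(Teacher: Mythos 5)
Your proposal is correct and follows essentially the same route as the paper: the base case $r=1$ is exactly the paper's observation that the roles of $k$ and $\ell$ are symmetric in the $r=1$ statement of Theorem~\ref{vanishingthm2} (your ``telescope on the other factor''), and the inductive step is the same Castelnuovo--Mumford telescoping of $G^k\otimes\sigma$ into pieces of exponent at least $6$, with the $H^1$ conditions supplied by the inductive hypothesis and the $H^2$ conditions by Theorem~\ref{vanishingthm2}. The only cosmetic difference is your cap of $10$ on the pieces versus the paper's cap of $9$ for $k\geq 18$; both satisfy the required inequalities.
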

\begin{proof}
We will prove by induction on $r$. The proof is similar to that of Theorem~\ref{vanishingthm2} and will be brief. 

Suppose $r=1$. We observe that $H^1(M_{G^{k}\otimes\sigma}\otimes G^\ell\otimes\delta)=0$ if $k\geq 18$ and $\ell\geq 12$. To see this, we just need to note that at the beginning of the proof in Theorem~\ref{vanishingthm2}, the roles of $k$ and $\ell$ are symmetric and thus we can exchange $k$ and $\ell$ in the vanishing statement for $r=1$ in Theorem~\ref{vanishingthm2}. The case for $r=1$ is now settled.

Suppose $r=2$.  Since $k\geq 18$, we have a decomposition $G^k\otimes\sigma=G^{k_1}\otimes \cdots\otimes G^{k_p}\otimes\sigma$ for some $p\geq 3$ and $6= k_1\leq\cdots\leq k_p\leq 9$.

To show that $H^1(M^{\otimes r}_{G^k\otimes\sigma}\otimes G^\ell\otimes\delta)=0$, it suffices to verify that $H^1(M^{\otimes{r-1}}_{G^k\otimes\sigma}\otimes G^\ell\otimes\delta)=0$ and the surjectivity for the following mappings:
$$
\varphi_j: H^0(G^{k_j})\otimes H^0(M^{\otimes{r-1}}_{G^k\otimes\sigma}\otimes G^{k_1+\cdots+k_{j-1}+\ell}\otimes\delta)\rightarrow H^0(M^{\otimes{r-1}}_{G^k\otimes\sigma}\otimes G^{k_1+\cdots+k_{j}+\ell}\otimes\delta)
$$ 
for $1\leq j\leq p-1$ and
$$
\varphi_p: H^0(G^{k_p}\otimes\sigma)\otimes H^0(M^{\otimes{r-1}}_{G^k\otimes\sigma}\otimes G^{k_1+\cdots+k_{p-1}+\ell}\otimes\delta)\rightarrow H^0(M^{\otimes{r-1}}_{G^k\otimes\sigma}\otimes G^{k+\ell}\otimes\sigma\otimes\delta).
$$

Now, together with Castelnuovo-Mumford Theorem, we conclude that the cohomology $H^1(S, M^{\otimes r}_{G^k\otimes\sigma}\otimes G^\ell\otimes\delta)$ vanishes if we have
$$
	H^1(M^{\otimes{r-1}}_{G^k\otimes\sigma}\otimes G^\ell\otimes\delta)=0,
$$
$$
	H^1(M^{\otimes{r-1}}_{G^k\otimes\sigma}\otimes G^{\ell-k_1}\otimes\delta)=H^2(M^{\otimes{r-1}}_{G^k\otimes\sigma}\otimes G^{\ell-2k_1}\otimes\delta)=0,
$$
$$	
	H^1(M^{\otimes{r-1}}_{G^k\otimes\sigma}\otimes G^{k_1+\cdots+k_{j-1}+\ell-k_j}\otimes\delta)
	=H^2(M^{\otimes{r-1}}_{G^k\otimes\sigma}\otimes G^{k_1+\cdots+k_{j-1}+\ell-2k_j}\otimes\delta)=0
$$
for $2\leq j\leq p-1$ and
$$	
	H^1(M^{\otimes{r-1}}_{G^k\otimes\sigma}\otimes G^{k+\ell-2k_p}\otimes\sigma\otimes\delta)
	=H^2(M^{\otimes{r-1}}_{G^k\otimes\sigma}\otimes G^{k+\ell-3k_p}\otimes\sigma\otimes\delta)=0.
$$
These all hold if $k\geq 18$ and $\ell-6\geq 6r$, by the induction hypothesis and Theorem~\ref{vanishingthm2} (for the vanishing of $H^2$). The proof is now complete.
\end{proof}

\begin{theo}
Let $S$ be a fake projective plane satisfying Conjecture~\ref{lyconjecture}. Then $K^m_S$ satisfies the property $N_p$ if $m\geq 2p+4$.
\end{theo}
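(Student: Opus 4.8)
The plan is to use Green's criterion and reduce everything to the vanishing statements already established. I want to show that $K_S^m$ satisfies property $N_p$ for $m \geq 2p+4$, under Conjecture~\ref{lyconjecture}. Writing $K_S = G^3 \otimes \tau$ for an ample generator $G$ and torsion $\tau$, the line bundle $K_S^m = G^{3m} \otimes \tau^m$ is base-point-free for $m \geq 2$ (since $3m \geq 6$, by the argument of Proposition~\ref{vanishingprop}). By Green's theorem, it suffices to verify $H^1(S, \wedge^r M_{K_S^m} \otimes K_S^{ms}) = 0$, and hence (in characteristic zero) $H^1(S, M_{K_S^m}^{\otimes r} \otimes K_S^{ms}) = 0$, for $0 \leq r \leq p+1$ and $s \geq 1$. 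Translating into powers of $G$, I must check
\[
H^1\bigl(S, M_{G^{3m} \otimes \tau^m}^{\otimes r} \otimes G^{3ms} \otimes \tau^{ms}\bigr) = 0
\]
for $0 \leq r \leq p+1$ and $s \geq 1$.

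First I would dispose of the cases $r = 0$ and $r = 1$. The case $r = 0$ reduces to $H^1(S, K_S^{ms}) = H^1(S, G^{3ms} \otimes \tau^{ms}) = 0$, which holds by Lemma~\ref{vanishinglem} since $3ms \geq 3 \cdot 2 \cdot 1 = 6 \geq 4$. For $r = 1$, I would invoke Proposition~\ref{lastvanishingprop} (or equivalently the $r=1$ case of Theorem~\ref{vanishingthm2} with the roles of $k$ and $\ell$ exchanged): with $k = 3m \geq 18$ and $\ell = 3ms \geq 18$, the hypotheses are comfortably met. The main work is the range $2 \leq r \leq p+1$, and here I plan to apply Proposition~\ref{lastvanishingprop} directly: setting $k = 3m$ and $\ell = 3ms$, the proposition requires $k \geq 18$ and $\ell \geq 6(r+1)$. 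The first holds since $m \geq 2p+4 \geq 6$, giving $3m \geq 18$. For the second, since $s \geq 1$ and $m \geq 2p+4$, we have $\ell = 3ms \geq 3m \geq 3(2p+4) = 6p + 12 = 6(p+2) \geq 6(r+1)$ because $r \leq p+1$ forces $r + 1 \leq p + 2$. Thus all the required cohomology groups vanish.

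The one genuine subtlety to watch is where Conjecture~\ref{lyconjecture} actually enters. Proposition~\ref{lastvanishingprop} as stated does \emph{not} use the conjecture for $r \geq 1$, so the general $N_p$ statement for $p \geq 1$ follows unconditionally from the numerics above. The conjecture is needed only at the boundary $p = 0$, i.e. for projective normality of $K_S^5$ (the case $m = 2 \cdot 0 + 4 = 4$ is already covered by Corollary~\ref{maincoro}, while $m = 5$ is the new case), which is precisely the content of Theorem~\ref{k5thm}. Therefore I expect the proof to split cleanly: for $p \geq 1$ invoke Proposition~\ref{lastvanishingprop} and the arithmetic check above; for $p = 0$ appeal to Theorem~\ref{k5thm}, whose proof is the step that genuinely consumes the strengthened vanishing Proposition~\ref{strongervanishing} coming from Conjecture~\ref{lyconjecture}. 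The main obstacle is thus not in the present theorem but is entirely localized in the projective-normality case $K_S^5$; once that is granted, the higher syzygies propagate by the module-theoretic vanishing already in hand, with no further numerical gaps since the bound $m \geq 2p+4$ was chosen exactly so that $3ms \geq 6(r+1)$ holds across the whole range $0 \leq r \leq p+1$.
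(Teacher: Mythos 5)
Your proposal is correct and follows essentially the same route as the paper: Green's criterion combined with Proposition~\ref{lastvanishingprop} for $p\geq 1$, and Theorem~\ref{k5thm} (where Conjecture~\ref{lyconjecture} actually enters) for $p=0$. The only cosmetic difference is that the paper first reduces to the single outstanding case $m=2p+5$ via Corollary~\ref{maincoro} and the earlier corollary on $K_S^{2p+4}$, whereas you check the numerics $3ms\geq 6(r+1)$ uniformly for all $m\geq 2p+4$; just note that your intermediate claim ``$k=3m\geq 18$'' in the $r=1$ step is only valid once $p\geq 1$, which your final case split already accounts for.
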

\begin{proof}
By Corollary~\ref{maincoro}, it remains to check that $K^{2p+5}_S$ satisfies the property $N_p$. We have already shown that $K^5_S$ satisfies the property $N_0$ in Theorem~\ref{k5thm} and we now let $p\geq 1$. Let $L_p=K^{2p+5}_S$. According to Green's criterion, we just need to check that $$H^1(S, M^{\otimes r}_{L_p}\otimes L_p^s)=0$$ for $0\leq r \leq p+1$ and $s\geq 1$. 

Let $G$ be an ample generator of Pic$(S)$ and then we can write $$L_p=K^{2p+5}_S=G^{6p+15}\otimes\delta$$ for some torsion line bundle $\delta$. For $p\geq 1$, the desired vanishing statement follows from Proposition~\ref{lastvanishingprop} since $6p+15>18$ and $$(6p+15)s\geq 6p+15>6(p+2)\geq 6(r+1).$$
\end{proof}

\bigskip
\ni {\it Acknowledgement:}  It is a pleasure for the authors to thank Donald Cartwright for providing computer data files for the proof of Theorem 8 which leads to Theorem~\ref{mainthm3} and thank Lawrence Ein and Jun-Muk Hwang for fruitful discussions and providing very useful references. They would also like to thank the anonymous referees for carefully reading the paper and giving helpful suggestions.

\bigskip
\noindent{\bf References} 

\bs 
\ni[BH] P. Banagere, K. Hanumanthu, Syzygies of surfaces of general type. \textit{Geom. Dedicata.} 167 (2013), 123-149.

\ms\ni[BHPV] W. P. Barth, K. Hulek, C. A. M. Peters, A. Van de Ven, \textit{Compact Complex Surfaces}.
 Second edition. Ergebnisse der
Mathematik und ihrer Grenzgebiete. 3. Folge. A Series of Modern
Surveys in Mathematics 4. Springer-Verlag, Berlin, 2004.

\ms\ni [CS] D. Cartwright,  T. Steger,  Enumeration of the $50$ fake projective planes, \textit{C. R. Acad. Sci. Paris, Ser. 1},
348 (2010), 11-13, see also \\
http://www.maths.usyd.edu.au/u/donaldc/fakeprojectiveplanes/

\ms\ni[EL] L. Ein, R. Lazarsfeld, Syzygies and Koszul cohomology of smooth projective varieties of arbitrary dimension. \textit{Invent. Math.} 111 (1993), 51-67.

\ms\ni[G] M. Green, Koszul cohomology and the geometry of projective varieties II. \textit{J. Diff. Geom.} 20 (1984), 279-289.

\ms\ni[GP] F. J. Gallego, B. P. Purnaprajna, Projective normality and syzygies of algebraic surfaces. \textit{J. Reine. Angew. Math.} 506 (1999), 145-180.

\ms\ni[GKMS] S. Galkin, L. Katzarkov, A. Mellit, E. Shinder, Derived categories of Keum's fake projective planes. \textit{Adv. Math.} 278  (2015), 238-253.

\ms\ni[HT] J.-M. Hwang, W.-K. To, Syzygies of compact complex hyperbolic manifolds. \textit{J. Algebraic Geom.} 22 (2013), no. 1, 175-200.

\ms\ni[I] S. P. Inamdar, On syzygies of projective varieties. \textit{Pacific J. Math.} 177 (1997), 71-76

\ms\ni[K] J. Koll\'{a}r, \textit{Shafarevich Maps and Automorphic Forms.} Princeton University Press, Princeton, New Jersey 1995.

\ms\ni[L] R. Lazarsfeld, \textit{Positivity in Algebraic Geometry I}, Springer-Verlag Berlin Heidelberg 2004.

\ms\ni[LPP] R. Lazarsfeld, G. Paresch and M. Popa, Local positivity, multiplier ideals, and syzygies of abelian varieties.  \textit{Algebra Number Theory} 5 (2011), 185-196.

\ms\ni[LY] C.-J. Lai, S.-K. Yeung, Exceptional collection of objects on some fake projective planes. \textit{To appear in Int. Math. Res. Not.}, arXiv:2108.02412  

\ms\ni[P] B. P. Purnaprajna, Some results on surfaces of general type. \textit{Canad. J. Math.} 57 (2005), no. 4, 724-749.

\ms\ni[N] W. Niu, On syzygies of Calabi-Yau varieties and varieties of general type. \textit{Adv. Math.} 343 (2019), 756-788.

\ms\ni [PY1] G. Prasad,  S.-K. Yeung,  Fake projective planes. \textit{Invent. Math.} 168 (2007), 321-370. 

\ms\ni [PY2] G. Prasad,  S.-K. Yeung, Addendum to ``Fake projective planes. Invent. Math. 168, 321-370."
\textit{Invent. Math.} 182, 213-227, 2010.

\ms\ni [R\'em] R\'emy, R., Covolume des groupes $S$-arith\'emiques et faux plans projectifs, [d'apr\`es Mumford, Prasad, Klingler, Yeung, Prasad-Yeung],
S\'eminaire Bourbaki, 10\`eme ann\'ee, 2007-2007, $n^o$ 984.

\ms\ni[Rei] I. Reider, Vector Bundles of Rank 2 and Linear Systems on Algebraic Surfaces. \textit{Ann. Math.}, 127 (1988), 309-316.

\ms\ni[RV] R. Remmert, A. Van de Ven, Zur Funktionentheorie homogener komplexer Mannigfaltigkeiten, \textit{Topology} 2 (1963), 137-157.

\ms\ni[Y] S.-K. Yeung, Classification of fake projective planes. \textit{Handbook of Geometric Analysis (Vol II)}, ALM 13, 389-429.


\end{document}